\newtheorem{theorem}{Theorem}[section]
\newtheorem{lemma}[theorem]{Lemma}
\newtheorem{proposition}[theorem]{Proposition}
\newtheorem{corollary}[theorem]{Corollary}
\theoremstyle{remark}
\newcommand{\bN}{\mathbb{N}}
\newcommand{\bR}{\mathbb{R}}
\newcommand{\cO}{\mathcal{O}}
\newcommand{\cv}[1]{\underset{#1}{\longrightarrow}}
\newcommand{\Landau}[3]{\underset{#2}{#1}\left(#3\right)} 
\newcommand{\abs}[1]{\left|#1\right|}
\newcommand{\dist}{\mathrm{dist}}
\newcommand{\bE}{\mathbb{E}}
\newcommand{\expec}[1]{\bE\left[#1\right]}
\newcommand{\bP}{\mathbb{P}}
\newcommand{\prob}[1]{\bP\left(#1\right)}
\newcommand{\Leb}{\mathrm{Leb}}
\newcommand{\LIS}[1]{\mathrm{LIS}\left(#1\right)}
\newcommand{\carre}{[0,1]^2}
\newcommand{\perm}[1]{\mathrm{Perm}\left(#1\right)}
\newcommand{\sample}[2]{\mathrm{Sample}_{#1}\left(#2\right)}
\newcommand{\LISt}{\widetilde{\mathrm{LIS}}}
\def\OT{\widetilde{\cO}}
\def\ThetaT{\widetilde{\Theta}}
\def\OmegaT{\widetilde{\Omega}}
\title{Locally uniform random permutations with large increasing subsequences}
\author[1]{Victor Dubach}
\affil[1]{Institut {\'E}lie Cartan de Lorraine, Universit{\'e} de Lorraine, Nancy, France}
\date{}
\begin{document}

\maketitle

\begin{abstract}
	We investigate the maximal size of an increasing subset among points randomly sampled from certain probability densities.
    Kerov and Vershik's celebrated result states that the largest increasing subset among $N$ uniformly random points on $[0,1]^2$ has size asymptotically $2\sqrt{N}$.
    More generally, the order $\Theta(\sqrt{N})$ still holds if the sampling density is continuous.
    In this paper we exhibit two sufficient conditions on the density to obtain a growth rate equivalent to any given power of $N$ greater than $\sqrt{N}$, up to logarithmic factors.
    Our proofs use methods of slicing the unit square into appropriate grids, and investigating sampled points appearing in each box.
\end{abstract}

\section{Introduction}

\subsection{Random permutations sampled from a pre-permuton}

We start by defining the model of random permutations studied in this paper.
Consider points $X_1,\dots,X_N$ in the unit square $\carre$ whose $x$-coordinates and $y$-coordinates are all distinct. 
One can then define a permutation $\sigma$ of size $N$ in the following way: 
for any $i,j\in\llbracket1,N\rrbracket$, let $\sigma(i)=j$ whenever the point with $i$-th lowest $x$-coordinate has $j$-th lowest $y$-coordinate. 
We denote by $\perm{X_1,\dots,X_N}$ this permutation; 
see \Cref{fig_perm_ex} for an example.
Now suppose $\mu$ is a probability measure on $\carre$ and $X_1,\dots,X_N$ are random i.i.d.~points distributed under $\mu$: 
the random permutation $\perm{X_1,\dots,X_N}$ is then denoted by $\sample{N}{\mu}$.
To ensure this permutation is well defined, we suppose that the marginals of $\mu$ have no atom so that $X_1,\dots,X_N$ have almost surely distinct $x$-coordinates and $y$-coordinates.
We call such a measure a \textit{pre-permuton}; 
see \Cref{section_discussions} for a discussion around this name.
\medskip

Note that permutations sampled from the uniform measure on $\carre$ are uniformly random. 
The model of random permutations previously defined thus generalizes the uniform case while allowing for new tools in a geometric framework, as illustrated in \cite{AD95} (see also \cite{K06} for a variant with uniform involutions). 
This observation motivates the study of such models, as done for example in \cite{DZ95} or \cite{S22}.
\medskip

In the present paper we are interested in pre-permutons that are absolutely continuous with respect to the Lebesgue measure on $\carre$, and denote by $\mu_\rho$ the pre-permuton having density $\rho$.
Following \cite{S22} we call the permutations sampled under $\mu_\rho$ locally uniform.
This name is easily understood when $\rho$ is continuous, since the measure $\mu_\rho$ can then locally be approximated by a uniform measure.

\subsection{Growth speed of the longest increasing subsequence}

\begin{figure}
    \centering
    \includegraphics[scale=0.36]{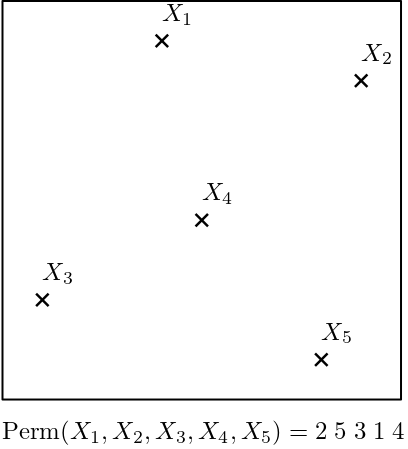}
    \caption{A family of points and its associated permutation, written in one-line notation $\sigma = \sigma(1)\, \sigma(2) \dots \sigma(N)$. 
    Here we have $\sigma(1)=2$ because the leftmost point is second from the bottom; and so on.}
    \label{fig_perm_ex}
\end{figure}

Let $\sigma$ be a permutation of size $N$. 
An increasing subsequence of $\sigma$ is a sequence of indices $i_1<\dots<i_k$ such that $\sigma(i_1)<\dots<\sigma(i_k)$. 
The maximal length of such a sequence is called the size of the \textit{longest increasing subsequence} of $\sigma$ and denoted by $\LIS{\sigma}$.
Let us write (here and throughout this paper), for all $N\in\bN^*$,
\begin{equation}\label{def_ell_N}
    \ell_N := \expec{\LIS{\sigma_N}}
    \:\text{ where }\:
    \sigma_N \text{ is a uniformly random permutation of size } N .
\end{equation}
In the 60's, Ulam asked about the asymptotic behavior of $\ell_N$ as $N\to\infty$.
The study of longest increasing subsequences has since then been a surprisingly fertile research subject with unexpected links to diverse areas of mathematics; 
see \cite{R15} for a review. 
A solution to Ulam's problem was found by Vershik and Kerov; 
using Young diagrams and the Robinson-Schensted's correspondence, they obtained the following:

\begin{theorem}[\cite{KV77}]\label{Ulam}
For each integer $N$, let $\sigma_N$ be a uniform permutation of size $N$. Then:
\begin{equation*}
    \frac{1}{\sqrt{N}} \LIS{\sigma_N} \cv{N\to\infty} 2
\end{equation*}
in probability and $L^1$-norm.
In particular, writing $\ell_N := \expec{\LIS{\sigma_N}}$:
\begin{equation*}
    \frac{1}{\sqrt{N}}\ell_N \cv{N\to\infty} 2.
\end{equation*}
\end{theorem}

The asymptotic behavior of the longest increasing subsequence in the uniform case is now well understood with concentration inequalities \cite{F98,T01} and an elegant asymptotic development \cite{BDJ99}.
Note that the concentration inequalities later recalled in \Cref{th_McDiarmid,th_Talagrand} can be used to recover \Cref{Ulam} from the first moment convergence.
\smallskip

It is then natural to try and generalize \Cref{Ulam} to $\LIS{\sample{N}{\mu}}$ for appropriate pre-permutons $\mu$.
One of the first advances on this question was obtained by Deuschel and Zeitouni who proved:
\begin{theorem}[\cite{DZ95}, Theorem 2]\label{Deuschel_Zeitouni}
    If $\rho$ is a $C_b^1$, bounded below probability density on $\carre$ then:
    \begin{equation*}
        \frac{1}{\sqrt{N}} \LIS{\sample{N}{\mu_\rho}} \cv{N\to\infty} K_\rho    
    \end{equation*}
    in probability, for some positive constant $K_\rho$ defined by a variational problem.
\end{theorem}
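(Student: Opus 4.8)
The plan is to recast the problem geometrically: by the very definition of $\perm$, an increasing subsequence of $\sample_N(\mu_\rho)$ is the same as a subset of the i.i.d.\ sample $X_1,\dots,X_N$ forming an \emph{increasing chain}, i.e.\ a finite sequence of points whose two coordinates increase simultaneously. So the goal becomes to show that $N^{-1/2}$ times the longest such chain converges in probability to
\begin{equation*}
    K_\rho \;:=\; \sup_{\gamma}\;\int_0^1 2\sqrt{\rho(\gamma(t))\,\dot\gamma_1(t)\,\dot\gamma_2(t)}\;dt ,
\end{equation*}
the supremum being over nondecreasing Lipschitz paths $\gamma=(\gamma_1,\gamma_2):[0,1]\to\carre$ from $(0,0)$ to $(1,1)$; one then checks separately that this equals the constant coming from the variational formula of \cite{DZ95}. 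Positivity is immediate: since $\rho\ge c$ for some $c>0$, the diagonal path already gives $K_\rho\ge 2\sqrt c>0$.

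The engine of both inequalities is a \emph{box estimate}, a quantitative refinement of \Cref{Ulam}: if $B\subseteq\carre$ is a rectangle on which $\rho$ stays within a factor $1\pm\eta$ of a constant, then with high probability (with sub-Gaussian control of the deviations) the box $B$ contains $(1+o(1))\,N\mu_\rho(B)$ of the $X_i$ and their longest increasing chain has length $(1+O(\eta)+o(1))\cdot 2\sqrt{N\mu_\rho(B)}$. This follows from \Cref{Ulam} via a monotone coupling sandwiching $\rho|_B$ between two constants and the invariance of $\LIS$ under axis rescalings, together with the sharp concentration inequalities for longest increasing subsequences \cite{F98,T01}; the $C_b^1$ hypothesis ensures that on a box of side $1/m$ one may take $\eta=O(1/m)$. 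For the \textbf{lower bound} I would fix $\varepsilon>0$ and a path $\gamma$ with $\int_0^1 2\sqrt{\rho(\gamma)\dot\gamma_1\dot\gamma_2}\ge K_\rho-\varepsilon$, subdivide $[0,1]$ into $m$ intervals $t_0<\dots<t_m$, and form the ``diagonal blocks'' $R_k=[\gamma_1(t_{k-1}),\gamma_1(t_k)]\times[\gamma_2(t_{k-1}),\gamma_2(t_k)]$. These are pairwise disjoint and increasing for the product order, so the longest chains they individually contain --- of length $\approx 2\sqrt{N\mu_\rho(R_k)}$ by the box estimate --- concatenate into a single increasing chain of total length $(1-o_m(1))\,2\sqrt N\sum_k\sqrt{\rho(\gamma(t_k))\,\Delta\gamma_1(t_k)\,\Delta\gamma_2(t_k)}$, a Riemann sum converging as $m\to\infty$ to $2\sqrt N\int_0^1\sqrt{\rho(\gamma)\dot\gamma_1\dot\gamma_2}\ge(K_\rho-\varepsilon)\sqrt N$.

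The \textbf{upper bound} is where the difficulty concentrates, and is the step I expect to be the main obstacle. Slice $\carre$ into an $m\times m$ grid; any increasing chain $C$ among the $X_i$ meets the grid along a monotone staircase of boxes $B_1,\dots,B_r$ with $r\le 2m-1$. One must not simply bound $|C\cap B_s|$ by $\LIS(B_s)$: summed over the staircase this overshoots by a factor up to $2$ (it would produce $4\sqrt N$ in the uniform case). The remedy is to replace $B_s$ by the bounding rectangle $R_s$ of $C\cap B_s$: since consecutive points of $C$ increase in both coordinates, the points of $C$ in a single box are contiguous and the $R_s$ are disjoint and increasing, so their widths $u_s$ and heights $v_s$ satisfy $\sum_s u_s\le 1$ and $\sum_s v_s\le 1$, while $|C\cap B_s|$ is at most the longest increasing chain among the $\approx N\mu_\rho(R_s)$ points of $R_s$. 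Applying the box estimate uniformly over a polynomially large net of sub-rectangles --- so that one exceptional event of probability $o(1)$, furnished by the concentration tails, governs all chains simultaneously --- yields $|C|\le(1+o_m(1))\,2\sqrt N\sum_s\sqrt{\rho\,u_s v_s}$ with high probability; maximizing the right-hand side over staircases and over $(u_s,v_s)$ constrained by $\sum u_s\le1$, $\sum v_s\le1$ (Cauchy--Schwarz settles the uniform case, returning $K_\rho=2$) gives, in the limit $m\to\infty$, precisely $K_\rho\sqrt N$. The genuinely delicate parts are: designing this bounding-box decomposition so that it is at once tight (no spurious factor $2$) and uniform over the $\sim 4^m$ staircases and the continuum of sub-rectangles --- which is exactly where the sub-Gaussian concentration and the $C_b^1$, bounded-below assumptions on $\rho$ are used --- and the separate, purely analytic verification that the $K_\rho$ displayed above is the same constant as in \cite{DZ95}.
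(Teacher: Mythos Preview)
The paper does not prove this theorem: it is stated as a result of Deuschel and Zeitouni, cited from \cite{DZ95}, and used only as background motivation. There is therefore no proof in the paper to compare your proposal against.

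That said, your sketch is essentially the strategy of the original Deuschel--Zeitouni argument: a local box estimate derived from the uniform case, a lower bound by concatenating increasing chains along a near-optimal nondecreasing curve, and an upper bound by covering any increasing chain with a monotone staircase of small rectangles and controlling all staircases simultaneously via concentration. Your identification of the main difficulty --- avoiding the spurious factor $2$ in the upper bound by passing to the bounding rectangles of the chain within each grid cell, so that the widths and heights sum to at most $1$ --- is exactly the key geometric observation. The variational constant you display is indeed the one appearing in \cite{DZ95}. So while there is nothing in the present paper to benchmark against, your outline is faithful to the actual proof in the cited reference.
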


This $\sqrt{N}$ behavior holds more generally when the sampling density is continuous, as we prove in \Cref{section_majoration_dens}:

\begin{proposition}\label{coro_cont}
    Let $f$ be a continuous probability density on $\carre$. Then:
    \begin{equation*}
        \expec{\LIS{\sample{N}{\mu_f}}} = \Landau{\Theta}{N\to\infty}{\sqrt{N}}.
    \end{equation*}
\end{proposition}

These results, as well as most of the literature on the subject, are restricted to the case of a pre-permuton with \enquote{regular}, bounded density.
The goal of this paper is to investigate the asymptotic behavior of $\LIS{\sample{N}{\mu_\rho}}$ when $\rho$ is a probability density on $\carre$ satisfying certain types of divergence. 
We state in \Cref{section_theoremes} sufficient conditions on $\rho$ for the quantity $\expec{\LIS{\sample{N}{\mu_\rho}}}$ to be equivalent to any given power of $N$ (between $N^{1/2}$ and $N$), up to logarithmic factors.
We then present in \Cref{section_concentration} a few concentration inequalities for $\LIS{\sample{N}{\mu_\rho}}$, explaining why we can focus on the asymptotic behavior of the mean.
\medskip

Similar asymptotics of LIS have also been found for other models of random permutations.
This is the case in \cite[Theorem 1.2]{BP15}, for Mallows random permutations under certain regimes of parameters.
However such models are quite different from the sampled permutations studied here: 
in the regime $n(1-q_n) \to +\infty$ of \cite{BP15}, Mallows permutations converge to the permuton that puts uniform mass along the diagonal of the unit square.
Let us also mention the power-law bounds obtained in \cite[Theorem 1.1]{BDG24} for permutations sampled under the (biased) \textit{Brownian separable permuton}.
This random permuton is defined as the push-forward of the Lebesgue measure via a mapping related to the Brownian excursion, and the techniques of \cite{BDG24} hinge the analysis of a fragmentation process.
\smallskip

Lastly, it might be worth pointing out that growth rates found in this paper can be seen as \enquote{intermediate} in the theory of pre-permutons. 
Indeed, we previously explained how the $\sqrt{N}$ behavior corresponds to a \enquote{regular} case. 
In another paper we study conditions under which the sampled permutation's LIS grows linearly with $N$:
\begin{proposition}[\cite{D24}]
    Let $\mu$ be a pre-permuton and define
    \begin{equation*}
        \LISt(\mu) := \max_A \mu(A)
    \end{equation*}
    where the maximum is taken over all \enquote{increasing} subsets of $\carre$, in the sense that any pair of points is $\prec$-ordered with the notation of \Cref{section_notations}.
    Then the function $\LISt$ is upper semi-continuous on pre-permutons and satisfies
    \begin{equation*}
        \frac{1}{N}\LIS{\sample{N}{\mu}} \cv{N\to\infty} \LISt(\mu)
        \quad\text{almost surely}.
    \end{equation*}
\end{proposition}

\section{Our results}

\subsection{Some notation}\label{section_notations}

Throughout the paper, the only order on the plane we consider is the partial order $\prec$ defined by:
\begin{equation*}
    \text{for all } (x_1,y_1) , (x_2,y_2) \in \bR^2 ,\quad
    (x_1,y_1)\prec(x_2,y_2) 
    \;\text{ if and only if }\;
    x_1<x_2 \text{ and } y_1<y_2.
\end{equation*}
We also write $\dist$ for the $L^1$-distance in the plane, namely:
\begin{equation*}
    \text{for all } (x_1,y_1) , (x_2,y_2) \in \bR^2 ,\quad
    \dist\big((x_1,y_1),(x_2,y_2)\big) := \abs{x_1 - x_2} + \abs{y_1 - y_2} ,
\end{equation*}
and we denote by $\Delta$ the diagonal of the unit square $\carre$.
We use the symbols $\bN$ for the set of non-negative integers, and $\bN^*$ for the set of positive integers.
\bigskip

Consider points $X_1,\dots,X_N$ in the unit square $\carre$, with distinct $x$-coordinates and distinct $y$-coordinates. 
Then the quantity $\LIS{\perm{X_1,\dots,X_N}}$ is easily read on the visual representation: 
it is the maximum size of an \enquote{increasing} subset of these points, \textit{i.e.}~the maximum number of points forming an up-right path. 
For this reason and to simplify notation, we write $\LIS{X_1,\dots,X_N}$ for this quantity.
\bigskip

Let $(a_n),(b_n)$ be two sequences of positive real numbers.
We write $a_n\sim b_n$ when they are asymptotically equivalent, that is when $a_n/b_n \to 1$.
We use the symbols $\OT, \ThetaT, \OmegaT$ for asymptotic comparisons up to logarithmic factors:
write $a_n = \OT(b_n)$ as $n\to\infty$ when there exist constants $c_1>0$ and $c_2\in\bR$ such that for some integer $n_0$:
\begin{equation*}
    \text{for all } n\ge n_0,\quad a_n \le c_1 \log(n)^{c_2}b_n.
\end{equation*}
We also write $a_n=\OmegaT(b_n)$ when $b_n=\OT(a_n)$, and $a_n=\ThetaT(b_n)$ when simultaneously $a_n = \OT(b_n)$ and $a_n=\OmegaT(b_n)$.
When these comparisons hold with no logarithmic factor (\textit{i.e.}~$c_2=0$), we use the standard notation $\cO,\Theta,\Omega$.

\subsection{First moment asymptotics of the longest increasing subsequence}\label{section_theoremes}

Our main results are two conditions on the divergence of the pre-permuton density that imply a large growth rate for the longest increasing subsequences in the sampled permutations. 
First we study densities diverging at a single point (see the left-hand side of \Cref{plot_densites}) and then we study densities diverging along the diagonal (see the right-hand side of \Cref{plot_densites}).

\begin{figure}
    \centering
    \includegraphics[scale=0.49]{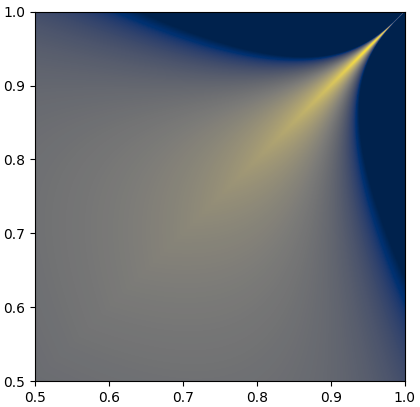}
    \qquad
    \quad
    \includegraphics[scale=0.67]{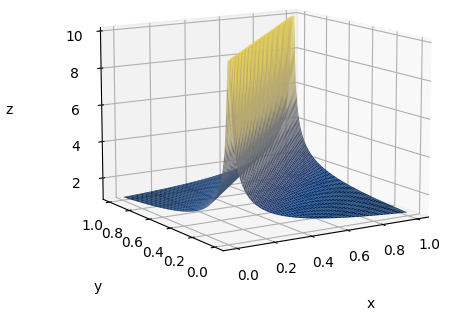}
    \caption{Representation of the divergent densities studied in this paper. 
    On the left, a representation of the density appearing in \Cref{ODG_divergence_pinched} with $\frac{\beta}{\beta-1}=3$. 
    Bright yellow indicates a high value while dark blue indicates a low value.
    On the right, the 3D graph of a function satisfying the hypothesis of \Cref{ODG_divergence_along} with $\alpha=-0.5$.}
    \label{plot_densites}
\end{figure}

The first natural type of divergence to consider is a divergence at a single point.
Suppose this happens at the north-east corner, in a radial way around this point. We show in this case that longest increasing subsequences behave similarly to the continuous density case, up to a logarithmic factor.

\begin{theorem}\label{ODG_divergence_integrable}
    Suppose the density $\rho$ is continuous on $\carre\setminus\lbrace (1,1)\rbrace$ and satisfies
    \begin{equation*}
        \rho(x,y) = \Theta\left(d^\alpha\right)
        \quad\text{as}\quad (x,y) \to (1,1)
    \end{equation*}
    where $d:=\dist((x,y),(1,1))$, for some $\alpha > -2$. Then:
    \begin{equation*}
        \expec{\LIS{\sample{N}{\mu_\rho}}}
        = \Landau{\ThetaT}{N\to\infty}{\sqrt N} .
    \end{equation*}
\end{theorem}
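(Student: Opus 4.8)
The plan is to prove the upper and lower bounds separately; the lower bound is immediate and the upper bound is the real work.

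\textbf{Lower bound.} For the $\OmegaT(\sqrt N)$ direction, note that $\rho$ being continuous on $\carre\setminus\{(1,1)\}$ and behaving like $d^\alpha$ near the corner with $\alpha>-2$ forces $\int_{\carre}\rho = 1$ (integrability is exactly the content of $\alpha > -2$ in two dimensions), and moreover $\rho$ is bounded below by a positive constant on some open ball $B\subset\carre$ away from $(1,1)$. Restricting the sampled points to those landing in $B$, a standard coupling argument shows that $\LIS\big(\sample_N(\mu_\rho)\big)$ stochastically dominates $\LIS$ of a $\sample_{N'}$ from the renormalized restriction of $\mu_\rho$ to $B$, where $N'$ is a Binomial$(N,\mu_\rho(B))$ random variable concentrated around $cN$. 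The restricted density is continuous and bounded below on the square circumscribing $B$ (after an affine rescaling), so \Cref{Deuschel_Zeitouni} (or the continuous-density corollary the paper refers to as \Cref{coro_cont}) gives a $\Theta(\sqrt{N'})=\Theta(\sqrt N)$ lower bound in expectation. Here one must be slightly careful: \Cref{Deuschel_Zeitouni} is stated for densities on the whole square, so I would rescale $B$ to a square and extend the density by a continuous bounded-below function, only losing constant factors.

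\textbf{Upper bound.} This is where the divergence must be controlled. The strategy is the grid-slicing method advertised in the abstract. Fix a large constant and slice $\carre$ into a grid of $K\times K$ boxes for a well-chosen $K=K(N)$ (I expect $K$ polylogarithmic in $N$, or perhaps a small power of $N$, to be tuned). Any increasing subset of the sampled points visits at most $2K-1$ of these boxes, so $\LIS\big(\sample_N(\mu_\rho)\big) \le \sum_{B\in\mathcal P} \#\{i : X_i\in B\}$ restricted to a monotone staircase of boxes, and one bounds this by $(2K-1)$ times the maximum over monotone staircases, which in turn is at most a sum over a staircase of the box-wise longest increasing subsequences. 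For each box $B$ not touching the corner $(1,1)$, the density is bounded on $B$ by some $M_B<\infty$, so the points in $B$ are dominated by a uniform sample and contribute $O\big(\sqrt{N\cdot\mu_\rho(B)}\,\big)$ to the LIS in expectation, by \Cref{Ulam} applied after rescaling $B$ to a unit square. Summing $\sqrt{\mu_\rho(B)}$ over a staircase of $O(K)$ boxes and using Cauchy--Schwarz gives $O\big(\sqrt{K}\cdot\sqrt{N\sum_B \mu_\rho(B)}\big) = O(\sqrt{KN})$. The one box $B_0$ containing the corner requires a separate, finer argument: since $\rho=\Theta(d^\alpha)$ there, $\mu_\rho(B_0)\to 0$ at a rate controlled by $K$, and inside $B_0$ one recurses, re-slicing the corner box into a dyadic family of annular regions at distances $2^{-j}$ from $(1,1)$, on each of which $\rho$ is comparable to $2^{-j\alpha}$; summing the contributions $\sqrt{N\cdot(\text{mass of annulus }j)}$ over $j$ gives a convergent (or only logarithmically divergent) geometric-type series because $\alpha>-2$ makes the annular masses summable. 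Combining, the total is $\OT(\sqrt N)$ once $K$ is chosen polylogarithmic.

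\textbf{Main obstacle.} The delicate point is the corner box: naively a box where $\rho$ is unbounded cannot be handled by the uniform comparison, so the dyadic annular decomposition around $(1,1)$ is essential, and one must check that the number of annuli is only $O(\log N)$ (since below scale $\sim 1/N$ a box contains $O(1)$ points in expectation and contributes $O(1)$), and that the geometric sum of $\sqrt{2^{-2j}\cdot 2^{-j\alpha}} = 2^{-j(1+\alpha/2)}$ over the relevant range of $j$ is bounded when $1+\alpha/2>0$, i.e. exactly $\alpha>-2$ — and produces only a logarithmic factor in the borderline bookkeeping. Making the two slicing scales (the global $K\times K$ grid and the local dyadic annuli) interact cleanly, and tracking that no more than a polylogarithmic factor is lost overall, is the part that needs care; everything else is routine domination by uniform samples plus \Cref{Ulam} and \Cref{Deuschel_Zeitouni}.
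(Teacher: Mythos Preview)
Your lower bound is fine and matches the paper's: bound $\rho$ below on a square away from the corner and invoke the uniform case via \Cref{lem_maj_density} (exactly the argument in the proof of \Cref{coro_cont}).

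For the upper bound your strategy is workable but takes a genuinely different route from the paper. The paper does \emph{not} use an $N$-dependent $K\times K$ grid. Instead it fixes $\beta>1$ by the relation $\alpha(1-\beta)+1-2\beta=-2$ and partitions $\carre$ once and for all into the reference diagonal boxes $C_n=[S_{n-1},S_n]^2$ (the $S_n$ being the partial sums used to build $\mu_\beta^\nearrow$) together with the two off-diagonal rectangle families $D_n^{(1)}=[S_{n-1},S_n]\times[S_n,1]$ and $D_n^{(2)}=[S_n,1]\times[S_{n-1},S_n]$. A short computation using $\rho=\Theta(d^\alpha)$ and \Cref{equivalent_ref} gives $\mu_\rho(C_n)=\Theta(n^{-3})$ and $\mu_\rho(D_n^{(i)})=\Theta(n^{-2})$; then \Cref{lem_maj_sum_density} reduces everything to the reference permutons $\mu_3^\nearrow$ and $\mu_2^\nearrow$, for which the $\beta\geq 2$ case of \Cref{ODG_reference} immediately gives $\ThetaT(\sqrt N)$. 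The payoff is that the reference-permuton machinery built for \Cref{ODG_divergence_pinched} is reused wholesale, so almost no new estimate is needed.

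Your dyadic-annulus decomposition near the corner is the same idea with geometric rather than polynomial scales, and works for the same reason (your series $\sum_j 2^{-j(1+\alpha/2)}$ converges precisely when $\alpha>-2$). Two places in your write-up need tightening. First, the sentence bounding $\LIS$ by the count of points in a staircase and then by ``$(2K-1)$ times the maximum'' is garbled; the inequality you actually want is $\LIS(\mathcal X)\le \max_{\mathcal C}\sum_{B\in\mathcal C}\LIS(\mathcal X\cap B)$ over increasing staircases $\mathcal C$. Second, the per-box claim ``contributes $O(\sqrt{N\mu_\rho(B)})$'' is only valid up to the sup-to-mean ratio of $\rho$ on $B$; you must note explicitly that on any grid box not touching the corner, and on each dyadic annulus, $d$ varies by at most a bounded factor, so this ratio is controlled by a constant depending only on $\alpha$. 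Finally, the outer $K\times K$ grid is superfluous: the dyadic annuli alone already cover a neighbourhood of $(1,1)$ and the complement has bounded density, so you could drop the grid layer entirely and simplify.
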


Note that the condition $\alpha>-2$ is necessary for integrability.
In order to see long increasing subsequences appear in the sampled permutations, we can "pinch" the density along the diagonal when approaching the north-east corner. 
This will force sampled points to concentrate along the diagonal, thus likely forming increasing subsequences, and allow for sharper divergence exponents.

\begin{theorem}\label{ODG_divergence_pinched}
Suppose the density $\rho$ is continuous on $\carre\setminus\lbrace (1,1)\rbrace$ and satisfies
\begin{align*}
    \rho(x,y) = \Theta\left(
    d^{\frac{\beta}{1-\beta}}\exp\left(
    -c \abs{x-y} d^{\frac{\beta}{1-\beta}}
    \right) \right)
    \quad\text{as}\quad (x,y) \to (1,1)
\end{align*}
where $d:= \dist((x,y),(1,1))$, for some $\beta \in ]1,2[$ and $c>0$. 
Then:
\begin{equation*}
    \expec{\LIS{\sample{N}{\mu_\rho}}}
    = \Landau{\ThetaT}{N\to\infty}{N^{1/\beta}} .
\end{equation*}
\end{theorem}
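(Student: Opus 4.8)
The plan is to prove matching upper and lower bounds on $\E[\LIS]$ of order $N^{1/\beta}$ up to logarithmic factors, using the slicing-into-grids philosophy announced in the abstract. Before anything else I would set up a dyadic decomposition of the unit square near the corner $(1,1)$: for $k\geq 1$ let $A_k$ be the annular region where $d \in [2^{-k-1}, 2^{-k}]$, and within each annulus decompose further according to the value of $|x-y|$. The key heuristic is that on the scale $d \approx 2^{-k}$, the "pinching" factor $\exp(-c|x-y|d^{\beta/(1-\beta)})$ confines mass to a strip around the diagonal of width roughly $d^{-\beta/(1-\beta)} = d^{\beta/(\beta-1)}$, and since $\beta \in (1,2)$ we have $\beta/(\beta-1) > 2$, so this strip is genuinely thinner than the annulus. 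I would compute $m_k := \mu_\rho(A_k)$ by integrating: the area of the relevant strip times the density value $\approx 2^{-k\beta/(1-\beta)}$ times $2^{-k}$ (the radial extent), which should give $m_k = \ThetaT(2^{-k(\text{something})})$, and then identify the dominant scale $k^\ast$ where most of the "useful" increasing structure lives — this will turn out to be the scale at which the expected number of sampled points in $A_k$ is large but the diagonal strip is still thin enough that those points are nearly totally ordered by $\prec$.

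For the lower bound I would argue as follows. Condition on the number $P_k \sim \mathrm{Binomial}(N, m_k)$ of sampled points falling in annulus $A_k$. Points landing in a thin diagonal strip of width $w$ and radial length $2^{-k}$ form a long increasing subsequence: concretely, one can tile such a strip by $\approx 2^{-k}/w$ small squares along the diagonal, each of side $\approx w$, and picking one point from each occupied square yields an increasing subset; this is exactly the Kerov–Vershik / patience-sorting lower bound localized to a strip, giving $\LIS \gtrsim \min(P_k, 2^{-k}/w)$ up to constants and logs, after averaging over how $P_k$ distributes among the squares. Optimizing over $k$ — balancing "enough points" ($Nm_k \gtrsim 2^{-k}/w_k$) against "strip long enough" — should land on $2^{-k^\ast} \approx N^{-(\beta-1)/\beta}$ and hence $\LIS \gtrsim N^{1/\beta}$ up to logarithmic factors. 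I would make the "pick one point per occupied box" step rigorous via a second-moment or Chernoff argument showing a constant fraction of the $\Theta(2^{-k^\ast}/w_{k^\ast})$ boxes are occupied when the expected count per box is $\Theta(1)$ (tuning $k$ slightly so this holds, absorbing the loss into the log).

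For the upper bound I would use a union-bound / first-moment argument over increasing paths, sliced by annulus. Away from the corner (say $d \geq \delta$ for fixed $\delta$) the density is bounded, so by \Cref{Deuschel_Zeitouni} (or the cruder fact that $\LIS$ of $n$ uniform-ish points is $O(\sqrt n)$) that part contributes only $O(\sqrt N)$, which is $\OT(N^{1/\beta})$ since $\beta < 2$. For the corner part I would bound the contribution of each annulus $A_k$ separately: an increasing subsequence restricted to $A_k$ has length at most $\min(P_k, C\cdot 2^{-k}/w_k)$ — the first bound is trivial, the second because an increasing chain inside a strip of diagonal-width $w_k$ and length $2^{-k}$ cannot have more than $\approx 2^{-k}/w_k$ points that are pairwise $\prec$-comparable once you account for the strip geometry (here one must be careful: the density is only $\Theta(\cdot)$ of the pinched profile, not exactly it, so the effective strip is a constant multiple wider, harmless). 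Summing $\sum_k \E[\min(P_k, 2^{-k}/w_k)]$ and checking the sum is dominated by the $k^\ast$ term up to a logarithmic number of comparable terms gives the matching $\OT(N^{1/\beta})$ bound. Concatenating the per-annulus chains only loses a factor equal to the number of annuli, which is $O(\log N)$, again absorbed.

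The main obstacle I anticipate is the upper bound's control of $\LIS$ within a single strip-shaped region: it is intuitively clear that a long thin diagonal strip of aspect ratio $2^{-k}/w_k$ cannot support an increasing subsequence much longer than $2^{-k}/w_k$ regardless of how many points it contains, but turning this into a clean deterministic geometric lemma — essentially that if one partitions $\carre$ into $M$ regions each of which is "thin transverse to the diagonal", then $\LIS \le 2M + (\text{small})$ — requires care, and the $\Theta$-rather-than-exact form of the density hypothesis means the strip widths are only known up to constants, so the partition must be chosen robustly. A secondary technical point is handling the non-uniformity of the density \emph{within} each annulus (it varies by a bounded factor across $A_k$), which I expect to handle by comparison with genuinely uniform measures on slightly inflated/deflated strips, a standard monotonicity-of-$\LIS$-under-coupling argument.
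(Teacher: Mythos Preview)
Your lower bound is sound and close to the paper's: both tile a neighborhood of the diagonal near $(1,1)$ by small increasing squares, choose the scale so that each receives $\Theta(1)$ expected sampled points, and count occupied squares. The paper packages this through a ``reference permuton'' $\mu_\beta^\nearrow$ supported on an increasing chain of boxes $C_n=[S_{n-1},S_n]^2$, but the content is the same.

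The upper bound, however, contains a genuine error, not merely a technicality needing care. The assertion you flag as the ``main obstacle'' --- that a diagonal strip of width $w_k$ and length $2^{-k}$ cannot support an increasing chain of more than $\approx 2^{-k}/w_k$ points \emph{regardless of how many points it contains} --- is simply false: any number of points placed on the diagonal itself lie inside the strip and form an increasing chain. A thin strip \emph{along} the diagonal imposes no deterministic combinatorial bound on $\LIS$ whatsoever, so the bound $\LIS\vert_{A_k}\le C\cdot 2^{-k}/w_k$ you want to sum does not hold and the scheme collapses. What is true, and what is actually needed, is the \emph{probabilistic} statement: if $P_k$ points are approximately uniform in a diagonal strip of aspect ratio $r_k=2^{-k}/w_k$, then $\E[\LIS]=O\big(\sqrt{P_k\, r_k}\big)$, obtained by cutting the strip into $r_k$ square boxes and invoking Kerov--Vershik in each. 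Summing $\min\big(\E[P_k],\sqrt{\E[P_k]\,r_k}\big)$ over $k$ then does give $\OT(N^{1/\beta})$. The paper implements exactly this: it bounds $\rho\le M(f+g+h)$ with $f$ uniform and $g,h$ piecewise-uniform on two \emph{overlapping} increasing chains of boxes (two chains so that their union covers a full neighborhood of the diagonal near the corner), reduces via a coupling lemma to estimating $\LIS$ for each summand, and handles $g,h$ through its reference-permuton analysis --- which is precisely where the per-box $\sqrt{n}$ input from \Cref{Ulam} enters. Once you replace your false geometric lemma with this probabilistic one, your argument becomes the paper's.
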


Note that when $\beta$ varies between $1$ and $2$, the exponent $\frac{\beta}{1-\beta}$ varies between $-\infty$ and $-2$.
Note also that such densities exist by integrability of the estimate.
\medskip

Instead of a divergence at a single point, we may also study a type of divergence along an increasing curve.
This can be done with a power function and provides a different condition from \Cref{ODG_divergence_pinched} to obtain a behavior equivalent to any given power of $N$ (between $N^{1/2}$ and $N$), up to a logarithmic factor.

\begin{theorem}\label{ODG_divergence_along}
Suppose the density $\rho$ is continuous on $\carre\setminus\Delta$ and satisfies
\begin{equation*}
    \rho(x,y) = \Theta\left( 
    |x-y|^{\alpha}
    \right)
    \quad\text{as}\quad |x-y| \to 0
\end{equation*}
for some $\alpha \in ]-1, 0[$. Then:
\begin{equation*}
    \expec{\LIS{\sample{N}{\mu_\rho}}}
    = \underset{N\to\infty}{\ThetaT}\left( N^{1/(\alpha+2)} \right).
\end{equation*}
\end{theorem}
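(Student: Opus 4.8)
The plan is to prove both inequalities by slicing $\carre$ into grids aligned with the diagonal $\Delta$ and comparing, box by box, the restriction of $\mu_\rho$ with a uniform law. Write $m:=\alpha+2\in\;]1,2[$ for the target exponent. I would first record two scaling estimates, obtained via the substitution $(x,y)=(su,sv)$ and the integrability $\alpha>-1$: for a box $B=[a,a+s]^2$ straddling the diagonal, $\mu_\rho(B)=\Theta(s^{m})$, while $\mu_\rho\big(\{|x-y|\le\delta\}\big)=\Theta(\delta^{1+\alpha})$; I would also note in passing that $\alpha>-1$ keeps the marginal densities of $\mu_\rho$ bounded, so that $\mu_\rho$ is a genuine pre-permuton.

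\emph{Lower bound.} Fix a positive integer $M$ and consider the diagonal boxes $B_i:=[(i-1)/M,\,i/M]^2$, $1\le i\le M$. On $B_i$ we have $|x-y|\le 1/M$, hence $\rho\ge c\,M^{-\alpha}$ there for $M$ large; since $\mu_\rho(B_i)=\Theta(M^{-m})$ and $m-\alpha=2$, the normalized restriction of $\mu_\rho$ to $B_i$ then dominates a fixed multiple of the uniform law on $B_i$, the multiple being bounded away from $0$ uniformly in $M$. Thinning accordingly, conditionally on the number $N_i$ of sampled points in $B_i$ a positive fraction of them is i.i.d.\ uniform on $B_i$, so the longest increasing subset they form has expected length $\Omega(\sqrt{N_i})$ as soon as $\E[N_i]\to\infty$, by the classical $\E[\LIS(\cdot)]\sim 2\sqrt n$ for $n$ uniform points. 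As any point of $B_i$ is $\prec$ any point of $B_j$ for $j>i$, these subsets concatenate, so $\LIS\big(\sample_N(\mu_\rho)\big)\ge\sum_{i=1}^M\LIS(B_i)$. Choosing $M=\ThetaT(N^{1/m})$ so that $\E[N_i]=\Theta(M^{-m}N)$ is of order $\log N$, a Chernoff bound together with a union bound over the $M$ boxes ensures $N_i\ge\tfrac12\E[N_i]$ for every $i$ with high probability, and one concludes $\E\big[\LIS\big(\sample_N(\mu_\rho)\big)\big]=\OmegaT(N^{1/m})$.

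\emph{Upper bound.} Here I would rely on the subadditivity $\LIS(A\cup B)\le\LIS(A)+\LIS(B)$, immediate from the fact that the trace of an increasing subset on $A$ (resp.\ $B$) is increasing, to cut the sample according to its distance to $\Delta$: let $G_j:=\{2^{-j-1}<|x-y|\le 2^{-j}\}$ for $0\le j\le J$ and $G_\infty:=\{|x-y|\le 2^{-J-1}\}$. On $G_j$ the density oscillates by a bounded factor around $\Theta(2^{j|\alpha|})$; covering $G_j$ by $\Theta(2^j)$ boxes of side $\Theta(2^{-j})$ and dominating, by a standard Poissonization, the sampled points in each such box by a Poisson process of constant intensity on it, the classical estimate bounds the expected LIS inside the box by $O\big(2^{-j}\sqrt{2^{j|\alpha|}N}\big)$; summing over the $\Theta(2^j)$ boxes, $\E[\LIS(G_j)]=O\big(2^{j|\alpha|/2}\sqrt N\big)$. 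For the core I would just bound $\LIS(G_\infty)$ by the number of sampled points it contains, of expectation $\Theta\big(2^{-J(1+\alpha)}N\big)$. Summing over $j$, the geometric series being dominated by its last term,
\[\E\big[\LIS\big(\sample_N(\mu_\rho)\big)\big]=O\big(2^{J|\alpha|/2}\sqrt N+2^{-J(1+\alpha)}N\big),\]
and the choice $2^{J}=\Theta(N^{1/m})$ balances the two terms; since $|\alpha|+\alpha=0$ both reduce to $O(N^{1/m})$.

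\emph{Main obstacle.} The genuinely delicate points, rather than the bookkeeping (the two scaling integrals, the dyadic optimisation, and checking that the exponents telescope to $1/(\alpha+2)$), are the two couplings and their uniformity over the grid: for the lower bound the domination of the uniform law inside a diagonal box must not degenerate as $M\to\infty$, which is precisely why one needs the \emph{sharp} two-sided estimate $\mu_\rho(B_i)=\Theta(M^{-m})$ and not merely an inequality; for the upper bound one must replace the i.i.d.\ sample inside a box --- which is \emph{not} uniform there --- by a constant-intensity Poisson process while keeping the correct power of the number of points, and then run this simultaneously over polynomially many boxes, which is where the Chernoff and union bounds, and hence the logarithmic factors hidden in $\ThetaT$, come in.
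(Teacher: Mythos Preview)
Your argument is correct, and both halves take a genuinely different route from the paper's.

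For the lower bound the paper is more elementary than yours: it uses the same diagonal boxes $B_i=[(i-1)/b_N,i/b_N]^2$ with $b_N=\lfloor N^{1/(\alpha+2)}\rfloor$, but instead of extracting a $\sqrt{N_i}$-sized increasing subset from each box it simply notes that $\mu_\rho(B_i)\ge\eta/N$ forces $\P(N_i\ge1)\ge p_0>0$, whence $\E[\LIS]\ge\sum_i\P(N_i\ge1)\ge p_0\,b_N$. This gives $\Omega(N^{1/(\alpha+2)})$ with no concentration step and no logarithmic loss; your thinning/Chernoff detour works but is unnecessary.

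For the upper bound the paper does not decompose dyadically by distance to $\Delta$ at all. It stays with the single $b_N\times b_N$ grid, observes that \emph{every} box $C_{i,j}$ (diagonal or not) has $\mu_\rho$-mass $O(1/N)$ since the maximum of $\int_{C_{i,j}}|x-y|^\alpha$ is attained on the diagonal, and then applies Bernstein plus a union bound over the $b_N^2$ boxes to get $|\mathcal{X}_{C_{i,j}}|=O(\log^2 N)$ simultaneously. The combinatorial point is that any $\prec$-increasing subset of $\mathcal{X}$ visits at most $2b_N$ boxes of the grid, yielding $\LIS\le 2b_N\cdot O(\log^2 N)=\OT(N^{1/(\alpha+2)})$. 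Your dyadic-annulus argument instead exploits the $\sqrt{\cdot}$ behaviour of the LIS inside each covering box; carried out via linearity of expectation and Jensen (or a single global Poissonization) rather than simultaneous control, it in fact appears to give $O(N^{1/(\alpha+2)})$ with no logarithmic factor --- so your closing remark attributing the $\ThetaT$ to Chernoff/union bounds is slightly off for the upper half, and your method may actually sharpen the paper's statement on this point.
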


While these previous assumptions are all quite intuitive to consider,
\Cref{ODG_divergence_along} is of a somewhat other nature than \Cref{ODG_divergence_integrable,ODG_divergence_pinched}.
As explained in \Cref{section_methodes}, their proofs also work differently and illustrate slightly distinct techniques which might have broader applications.\bigskip

The study of densities in \Cref{ODG_divergence_integrable,ODG_divergence_pinched} relies on
a family of reference pre-permutons (\textit{permutons} actually, see \Cref{section_discussions}) that we now introduce.
Fix two parameters $\beta >1$ and $\gamma\in\bR$. 
Define for any positive integer $k\ge 1$:
\begin{equation*}
    u_{k} := \frac{1}{Z_{\beta,\gamma}}k^{-\beta}\log(k+1)^\gamma
    \quad\text{where}\quad
    Z_{\beta,\gamma} := \sum_{k\ge 1}k^{-\beta}\log(k+1)^\gamma.
\end{equation*}
For all $n\ge 0$, set
$S_{n} := \sum_{k=1}^n u_{k}$
and consider the sequence of disjoint boxes
$C_{n} := [S_{n-1},S_{n}]^2$, $n\in\bN^*$,
covering the diagonal in an up-right manner.
We can then define a probability density on the unit square by
\begin{equation*}
    \rho_{\beta,\gamma}^\nearrow := \sum_{k\ge 1}
    u_{k}^{-1}\mathbf{1}_{C_{k}}
\end{equation*}
and we write $\mu_{\beta,\gamma}^\nearrow$ for the (pre-)permuton having density $\rho_{\beta,\gamma}^\nearrow$ with respect to Lebesgue measure on $\carre$. See \Cref{fig_reference} for a representation.

\begin{figure}
\begin{center}
\includegraphics[scale=0.18]{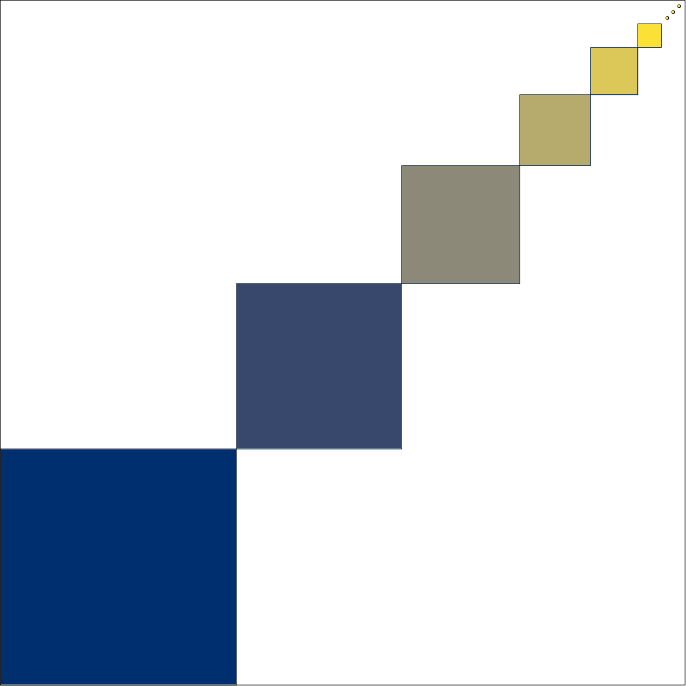}
\caption{A representation of the permuton $\mu_{\beta,\gamma}^\nearrow$. 
Bright yellow indicates a high value of the density while dark blue indicates a low value of the density.}\label{fig_reference}
\end{center}
\end{figure}

\begin{proposition}\label{ODG_reference}
Let $\gamma\ge0$. If $\beta\in ]1,2[$ then:
\begin{equation*}
    \expec{\LIS{\sample{N}{\mu_{\beta,\gamma}^\nearrow}}}
    = \underset{N\to\infty}{\ThetaT}\left( N^{1/\beta} \right).
\end{equation*}
If $\beta\ge 2$ then:
\begin{equation*}
    \expec{\LIS{\sample{N}{\mu_{\beta,\gamma}^\nearrow}}}
    = \underset{N\to\infty}{\ThetaT}\left( \sqrt{N} \right).
\end{equation*}
\end{proposition}

In \Cref{section_last_proofs} we use \Cref{ODG_reference} to prove \Cref{ODG_divergence_integrable,ODG_divergence_pinched} by comparing the densities involved.
The parameter $\beta$ allows $\rho_{\beta,\gamma}^\nearrow$ to have similar asymptotics to the density $\rho$ in \Cref{ODG_divergence_integrable} or \ref{ODG_divergence_pinched}, while the parameter $\gamma$ allows to slightly modify the size of the boxes so that they cover the divergence of $\rho$ adequately.
When $\gamma=0$, we drop this subscript and simply write $\rho_{\beta}^\nearrow$.

\subsection{Concentration around the mean}\label{section_concentration}

In this paper we only investigate the mean of $\LIS{\sample{N}{\mu}}$. 
The reason for this is that we can easily deduce asymptotic knowledge of the random variable itself from well known concentration inequalities. 
In our case it is sufficient to use what is usually referred to as Azuma's or McDiarmid's inequality, found in \cite[Lemma 1.2]{McD89} and whose origin goes back to \cite{A67}. 
One of its most common use is for the chromatic number of random graphs, but it is also well adapted to the study of longest increasing subsequences as illustrated in \cite{F98}. 

\begin{theorem}[McDiarmid's inequality]\label{th_McDiarmid}
Let $N\in\bN^*$, $X_1,\dots,X_N$ be independent random variables with values in a common space $\mathcal{X}$.
Let $f:\mathcal{X}^N\to\bR$ be a function satisfying the bounded differences property, that is: 
for all $i\in\llbracket1,N\rrbracket$ and $x_1,\dots,x_N,y_i\in\mathcal{X}$,
\begin{equation*}
    \abs{ f(x_1,\dots,x_i,\dots,x_N) - f(x_1,\dots,y_i,\dots,x_N) } \le c
\end{equation*}
for some constant $c>0$. 
Then for any positive number $\lambda>0$:
\begin{equation*}
    \prob{\big. \abs{ f(X_1,\dots,X_N) - \expec{f(X_1,\dots,X_N)} } > \lambda }
    \le 2\exp\left( \frac{-2\lambda^2}{N c^2} \right).
\end{equation*}
\end{theorem}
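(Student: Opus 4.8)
The plan is to prove this via the classical Doob martingale (Azuma--Hoeffding) argument. Write $\mathcal{F}_k := \sigma(X_1,\dots,X_k)$ for $k\in\lb 0,N\rb$, with $\mathcal{F}_0$ the trivial $\sigma$-algebra, and set
\[
D_k := \E\left[ f(X_1,\dots,X_N) \mid \mathcal{F}_k \right],
\]
so that $(D_k)_{0\leq k\leq N}$ is a martingale with $D_0 = \E[f(X_1,\dots,X_N)]$ and $D_N = f(X_1,\dots,X_N)$. The strategy is to control the martingale differences $Y_k := D_k - D_{k-1}$ conditionally on the past, and then to bound exponential moments.

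The key step is to show that, conditionally on $\mathcal{F}_{k-1}$, the increment $Y_k$ almost surely takes values in an interval of length at most $c$. For this I would use independence to write $D_k = g_k(X_1,\dots,X_k)$, where
\[
g_k(x_1,\dots,x_k) := \E\left[ f(x_1,\dots,x_k,X_{k+1},\dots,X_N)\right]
\]
is obtained by integrating out the last $N-k$ coordinates against their (product) law. On the event $\{X_1=x_1,\dots,X_{k-1}=x_{k-1}\}$ one then has $D_{k-1} = \E_{X_k}\!\left[ g_k(x_1,\dots,x_{k-1},X_k)\right]$, so that $Y_k = g_k(x_1,\dots,x_{k-1},X_k) - \E_{X_k}[g_k(x_1,\dots,x_{k-1},X_k)]$ is a centered function of $X_k$ alone, with range bounded by $\sup_{x_k,x_k'}\big( g_k(x_1,\dots,x_{k-1},x_k) - g_k(x_1,\dots,x_{k-1},x_k')\big) \leq c$, the last inequality coming from applying the bounded differences property inside the expectation defining $g_k$. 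By Hoeffding's lemma (a centered random variable supported in an interval of length $c$ has moment generating function at most $e^{t^2c^2/8}$), this yields $\E\!\left[ e^{tY_k}\mid\mathcal{F}_{k-1}\right] \leq e^{t^2 c^2/8}$ for every $t\in\R$.

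It then remains to telescope: conditioning successively on $\mathcal{F}_{N-1},\dots,\mathcal{F}_0$ and using the previous conditional bound at each stage gives
\[
\E\left[ e^{t(D_N-D_0)}\right] = \E\left[ e^{t\sum_{k=1}^N Y_k}\right] \leq e^{Nt^2c^2/8}.
\]
A Chernoff bound then gives $\P(D_N - D_0 > \lambda) \leq e^{-t\lambda + Nt^2c^2/8}$ for all $t>0$; optimizing at $t = 4\lambda/(Nc^2)$ produces the bound $e^{-2\lambda^2/(Nc^2)}$. Applying the same argument to $-f$, which satisfies the bounded differences property with the same constant, and taking a union bound over the two one-sided events yields the stated factor $2$.

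The main obstacle is the conditional range estimate for $Y_k$: it is precisely there that the independence of the $X_i$ enters, since it is what lets us express $D_k$ by integrating $f$ against the fixed law of $(X_{k+1},\dots,X_N)$ with $(x_1,\dots,x_k)$ frozen. Without independence, the conditional distribution of the tail would itself depend on $(x_1,\dots,x_k)$, and the bounded differences hypothesis would no longer directly control the oscillation of $g_k$ in its last variable. Everything else (Hoeffding's lemma and the Chernoff optimization) is routine.
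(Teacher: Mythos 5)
Your proof is correct and complete: the Doob martingale decomposition, the conditional range bound on $Y_k$ via independence and the bounded differences property, Hoeffding's lemma, the Chernoff optimization at $t=4\lambda/(Nc^2)$, and the union bound over the two tails all check out. The paper does not prove this statement itself --- it quotes it from \cite{McD89} (with origins in \cite{A67}) --- and your argument is exactly the standard proof given in those references, so there is nothing to reconcile.
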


We can apply this to $\LIS{X_1,\dots,X_N}$ where $X_1,\dots,X_N$ are i.i.d.~points distributed under $\mu$, noticing that changing the value of a single point changes the size of the largest increasing subset by at most $1$.

\begin{corollary}\label{Azuma}
Let $\mu$ be a pre-permuton. 
Then for any $N\in\bN^*$ and $\lambda>0$:
\begin{equation*}
    \prob{\big. \abs{ \LIS{\sample{N}{\mu}} - \expec{\LIS{\sample{N}{\mu}}} } > \lambda }
    \le 2\exp\left( \frac{-2\lambda^2}{N} \right).
\end{equation*}
\end{corollary}

This concentration inequality is especially useful when $\expec{\LIS{\sample{N}{\mu}}}$ is of order greater than $\sqrt{N}$, which is for example the case in \Cref{ODG_divergence_pinched} when $\beta\in]1,2[$.
\Cref{Azuma} then implies that the variable is concentrated around its mean in the sense that
\begin{equation*}
    \frac{\LIS{\sample{N}{\mu}}}{\expec{\LIS{\sample{N}{\mu}}}} \cv{N\to\infty} 1
\end{equation*}
in probability. 
Moreover $\LIS{\sample{N}{\mu}}$ admits a median of order $\ThetaT\left( N^{1/\beta} \right)$, and an analogous remark holds for \Cref{ODG_divergence_along}.
One could then apply the following sharper concentration inequality:

\begin{theorem}[Talagrand's inequality for longest increasing subsequences]\label{th_Talagrand}
Let $\mu$ be a pre-permuton. 
For any $N\in\bN^*$, denote by $M_N$ a median of $L_N:=\LIS{\sample{N}{\mu}}$. 
Then for all $\lambda>0$:
\begin{align*}
    \prob{ L_N \ge M_N + \lambda }
    \le 2\exp\left( \frac{-\lambda^2}{4(M_N+\lambda)} \right)
    \quad\text{and}\quad
    \prob{ L_N \le M_N - \lambda }
    \le 2\exp\left( \frac{-\lambda^2}{4M_N} \right).
\end{align*}
\end{theorem}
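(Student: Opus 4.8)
The plan is to derive both inequalities from Talagrand's convex distance inequality, via the classical ``$1$-Lipschitz and self-certifiable'' argument specialized to longest increasing subsequences, in the spirit of \cite{F98}. Viewing $L_N=\LIS(X_1,\dots,X_N)$ as a function of the i.i.d.\ points $X_1,\dots,X_N\in\carre$, I would first record its two relevant structural features. It has the bounded differences property with constant~$1$: changing one point alters the size of the largest increasing subset by at most~$1$, exactly as noted before \Cref{Azuma}. It is moreover \emph{self-certifiable}: whenever $L_N(x)\ge b$ for a positive integer $b$, the index set $I$ of any $b$ points forming an increasing subset has $|I|=b$ and satisfies $\LIS\ge b$ on every configuration agreeing with $x$ on $I$.

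These are precisely the hypotheses of the standard corollary of Talagrand's inequality: for any integer-valued function $h$ of independent coordinates that is $1$-Lipschitz in the above sense and self-certifiable, any integer $b\ge 1$, and any $t\ge 0$, one has $\P(h\le b-t\sqrt b)\,\P(h\ge b)\le e^{-t^2/4}$. I would either quote this directly or recall its proof in one line: with $A=\{h\ge b\}$, replacing the coordinates of $x$ on the certificate $I_y$ of any $y\in A$ by those of $y$ produces a point of $A$ differing from $x$ in at most $|I_y|\le b$ places, so the Lipschitz property forces $|\{i\in I_y:x_i\ne y_i\}|\ge b-h(x)$; a minimax argument over these certificate sets then gives $d_T(x,A)\ge (b-h(x))/\sqrt b$, hence $\{h\le b-t\sqrt b\}\subseteq\{d_T(\cdot,A)\ge t\}$, and Talagrand's inequality $\P(A)\,\P(d_T(\cdot,A)\ge t)\le e^{-t^2/4}$ concludes.

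It remains to specialize to $L_N$ and its median. As $L_N$ is integer-valued we may take $M_N\in\Z$, so that $\P(L_N\ge M_N)\ge\tfrac12$, $\P(L_N\le M_N)\ge\tfrac12$, and, for every $\lambda>0$, $\{L_N\le M_N-\lambda\}=\{L_N\le M_N-\lceil\lambda\rceil\}$ and $\{L_N\ge M_N+\lambda\}=\{L_N\ge M_N+\lceil\lambda\rceil\}$. Applying the corollary with $b=M_N$ and $t=\lceil\lambda\rceil/\sqrt{M_N}$, then using $\lceil\lambda\rceil\ge\lambda$, yields
\begin{equation*}
\P(L_N\le M_N-\lambda)=\P(L_N\le M_N-\lceil\lambda\rceil)\le\frac{e^{-\lceil\lambda\rceil^2/(4M_N)}}{\P(L_N\ge M_N)}\le 2\,e^{-\lambda^2/(4M_N)}.
\end{equation*}
Applying it instead with $b=M_N+\lceil\lambda\rceil$ and $t=\lceil\lambda\rceil/\sqrt{M_N+\lceil\lambda\rceil}$, so that $b-t\sqrt b=M_N$, and using that $u\mapsto u^2/(M_N+u)$ is nondecreasing on $[0,\infty)$, yields
\begin{equation*}
\P(L_N\ge M_N+\lambda)=\P(L_N\ge M_N+\lceil\lambda\rceil)\le\frac{e^{-\lceil\lambda\rceil^2/(4(M_N+\lceil\lambda\rceil))}}{\P(L_N\le M_N)}\le 2\,e^{-\lambda^2/(4(M_N+\lambda))},
\end{equation*}
which are the two desired bounds.

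I do not expect a genuine obstacle here: the analytic content is entirely carried by Talagrand's convex distance inequality, which is cited, and the only things needing care are the self-certifiability of $\LIS$ --- immediate, a fixed optimal increasing subset being its own certificate --- and the elementary bookkeeping above, where the integrality of $L_N$ and the freedom to choose $M_N\in\Z$ are exactly what allow the ceilings $\lceil\lambda\rceil$ to be dropped in favour of $\lambda$ in the final constants.
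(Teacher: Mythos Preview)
The paper does not actually prove this theorem: it simply cites Theorem~7.1.2 in \cite{T01} for the uniform case and remarks that ``the proof works the same for random permutations sampled from pre-permutons.'' Your proposal spells out exactly that standard argument --- bounded differences plus self-certifiability of $\LIS$, fed into Talagrand's convex distance inequality --- and carries it out correctly, so it is both correct and the same approach as the one the paper points to.

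One minor quibble: the theorem as stated allows $M_N$ to be \emph{any} median, whereas you assume $M_N\in\Z$. Since $L_N$ is integer-valued, the set of medians is a closed interval containing at least one integer, and the inequalities for a non-integer median follow easily from those at the neighbouring integer medians via monotonicity of the bounds in $M_N$; this is cosmetic rather than a gap.
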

See \cite[Theorem 7.1.2]{T01} for the original reference in the case of uniform permutations. 
The proof works the same for random permutations sampled from pre-permutons.
See also \cite[Theorem 5]{K06} for a nice application to longest increasing subsequences in random involutions.

\subsection{Discussion}\label{section_discussions}

\textit{Improvements.}

Several hypotheses made in the theorems simplify the calculations but are not crucial to the results. 
For instance \Cref{ODG_divergence_integrable,ODG_divergence_pinched} could be generalized by replacing the north-east corner with any point in the unit square and the diagonal with any local increasing curve passing through that point, under appropriate hypotheses.
A similar remark holds for \Cref{ODG_divergence_along}.
We could also state \Cref{ODG_reference} for general $\gamma\in\bR$, but prefer restricting ourselves to the case $\gamma\ge0$ since this is all we need for the proofs of \Cref{ODG_divergence_integrable,ODG_divergence_pinched} and it requires a bit less work.

The necessity of logarithmic factors in our estimates remains an open question. 
We believe our results could be sharpened in this direction, but our techniques do not seem sufficient to this aim.
\bigskip

\noindent\textit{Links to permuton and graphon theory.}

When $\mu$ is a probability measure on $\carre$ whose marginals are uniform, we call it a \textit{permuton} as in \cite{GGKK15}. 
The theory of permutons was introduced in \cite{HKMRS13} and is now widely studied \cite{KP12,M15,BBFGP18,BBFGMP22}. 
It serves as a scaling limit for random permutations and is directly related to the study of pattern occurrences (see e.g.~\cite[Definition 1.5]{HKMRS13} or \cite[Theorem 2.5]{BBFGMP20}).
One of its fundamental results is that for any permuton $\mu$, the sequence $\big(\sample{N}{\mu}\big)_{N\in\bN^*}$ almost surely converges \enquote{in the permuton sense} to $\mu$.

Reading this paper does not require any prior knowledge about the literature on permutons: 
it is merely part of our motivation for the study of models $\sample{N}{\mu}$.
Notice however that considering pre-permutons instead of permutons is just a slight generalization. 
Indeed, one can associate to any pre-permuton $\mu$ a unique permuton $\hat{\mu}$ such that random permutations sampled from $\mu$ and $\hat{\mu}$ have the same law (see e.g.~\cite[Remark 1.2]{BDMW22}).

This paper was partly motivated by \cite{McK19}, where an analogous problem is tackled for graphons. 
The theory of graphons for the study of dense graph sequences is arguably the main inspiration at the origin of permuton theory, and there exist numerous bridges between them \cite{GGKK15,BBDFGMP21}. 
For instance the longest increasing subsequence of permutations corresponds to the clique number of graphs. 
In \cite{DHM15} the authors exhibit a wide family of graphons bounded away from $0$ and $1$ whose sampled graphs have logarithmic clique numbers, thus generalizing this property of Erd\H{o}s-Rényi random graphs. 
In some sense this is analogous to Deuschel and Zeitouni's result on permutations (\Cref{Deuschel_Zeitouni} here). 
In \cite{McK19} the author studies graphons allowed to approach the value $1$, and proves in several cases that clique numbers behave as a power of $N$; 
the results of the present paper are counterparts for permutations.

\subsection{Proof method and organization of the paper}\label{section_methodes}

The proofs of \Cref{ODG_divergence_integrable,ODG_divergence_pinched} rely on bounding the density of interest on certain appropriate areas with other densities which are easier to study. 
This general technique is developed in \Cref{section_majoration_dens} where we prove two lemmas of possible independent interest. 

\Cref{section_ref} is devoted to our reference permutons, which are the main ingredient when bounding general densities. 
The idea for the proof of \Cref{ODG_reference} is that points sampled from $\mu_{\beta,\gamma}^\nearrow$ are uniformly sampled on each box $C_n$. 
We can thus use \Cref{Ulam} on each box containing enough points, the latter property being studied through appropriate concentration inequalities on binomial variables.

We then prove \Cref{ODG_divergence_integrable,ODG_divergence_pinched} in \Cref{section_last_proofs}, using all the previously developed tools.

Finally, we prove \Cref{ODG_divergence_along} in \Cref{section_new_proofs}. 
This proof does not use the previous techniques and rather uses a grid on the unit square that gets thinner as $N\to\infty$. 
The main idea is to bound the number of points appearing in any increasing sequence of boxes.
The sizes of the boxes are chosen so that a bounded number of points appear in each box, and concentration inequalities are used to make sure such approximations hold simultaneously on every box.

\section{Bounds on LIS from bounds on the density}\label{section_majoration_dens}

One of the main ideas for the proofs of \Cref{ODG_divergence_integrable,ODG_divergence_pinched} is to deduce bounds on the order of LIS from bounds on the sampling density.
We state here two useful lemmas to this aim.

\begin{lemma}\label{lem_maj_density}
Suppose $f,g$ are two probability densities on $\carre$ such that $f\ge \varepsilon g$ for some $\varepsilon > 0$. Then:
\begin{equation*}
    \expec{ \LIS{\sample{N}{\mu_f}} } = \underset{N\to\infty}{\Omega}\left( \expec{ \LIS{\sample{\lfloor\varepsilon N\rfloor}{\mu_g}}} \right) .
\end{equation*}
Likewise, if $f\le Mg$ for some $M>0$ then
\begin{equation*}
    \expec{ \LIS{\sample{N}{\mu_f}} } = \underset{N\to\infty}{\cO}\left( \expec{ \LIS{\sample{\lceil M N\rceil}{\mu_g}} } \right) .
\end{equation*}
\end{lemma}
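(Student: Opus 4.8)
The plan is to realize the hypotheses $f\ge\varepsilon g$ and $f\le Mg$ as \emph{thinning couplings} between the two sampling models, and then to transfer a random (binomial) number of points from one sample to the other, using that $\LIS$ is monotone under adding points. Write $\ell^h_N:=\E[\LIS(\sample_N(\mu_h))]$ for a probability density $h$ on $\carre$; since inserting a point can only enlarge the longest increasing subset, $N\mapsto\ell^h_N$ is nondecreasing. First note that integrating $f\ge\varepsilon g$ over $\carre$ forces $\varepsilon\le1$, and likewise $M\ge1$; the borderline cases $\varepsilon=1$ and $M=1$ give $f=g$ almost everywhere and are trivial, so I may assume $\varepsilon\in(0,1)$ and $M>1$.

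For the first inequality I would write $f=\varepsilon g+(1-\varepsilon)h$, where $h:=(f-\varepsilon g)/(1-\varepsilon)$ is a genuine probability density. Then a $\mu_f$-distributed point can be produced by tossing an $\varepsilon$-coin and sampling from $\mu_g$ on heads, from $\mu_h$ on tails. Doing this independently $N$ times yields i.i.d.\ points $X_1,\dots,X_N\sim\mu_f$ and, recording the coins, a variable $B\sim\mathrm{Bin}(N,\varepsilon)$ counting the heads; conditionally on $B=b$ the ``heads'' points are $b$ i.i.d.\ $\mu_g$-points, so $\E\big[\LIS(\text{heads points})\mid B\big]=\ell^g_B$. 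Discarding the ``tails'' points only decreases $\LIS$, hence
\begin{equation*}
    \ell^f_N\ \ge\ \E\big[\LIS(\text{heads points})\big]\ =\ \E\big[\ell^g_B\big]\ \ge\ \ell^g_{\lfloor\varepsilon N\rfloor}\,\P\big(B\ge\lfloor\varepsilon N\rfloor\big)\ \ge\ \tfrac12\,\ell^g_{\lfloor\varepsilon N\rfloor},
\end{equation*}
using monotonicity of $k\mapsto\ell^g_k$ together with the classical fact that $\lfloor\varepsilon N\rfloor$ is at most a median of $\mathrm{Bin}(N,\varepsilon)$, so $\P(B\ge\lfloor\varepsilon N\rfloor)\ge\tfrac12$. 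This is exactly the asserted $\Omega$.

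The second inequality is the same coupling run the other way. From $f\le Mg$ I get $g=\tfrac1M f+(1-\tfrac1M)h'$ with $h':=(g-\tfrac1M f)/(1-\tfrac1M)$ a probability density, so a $\mu_g$-point coincides with a $\mu_f$-point with probability $1/M$. Applying this to $N':=\lceil MN\rceil$ i.i.d.\ $\mu_g$-points, the number $B'\sim\mathrm{Bin}(N',1/M)$ of them drawn from the $\mu_f$-component has mean $N'/M\ge N$, so $\lfloor N'/M\rfloor\ge N$, and arguing as above,
\begin{equation*}
    \ell^g_{N'}\ \ge\ \E\big[\ell^f_{B'}\big]\ \ge\ \ell^f_N\,\P\big(B'\ge N\big)\ \ge\ \ell^f_N\,\P\big(B'\ge\lfloor N'/M\rfloor\big)\ \ge\ \tfrac12\,\ell^f_N,
\end{equation*}
so $\ell^f_N\le2\,\ell^g_{\lceil MN\rceil}$, which is the asserted $O$.

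The computations here are elementary; the only step needing a little care — and the closest thing to an obstacle — is the lower bound on $\P(B\ge\lfloor\varepsilon N\rfloor)$ (resp.\ $\P(B'\ge N)$). I would handle it via the location of the binomial median as above, but one could instead invoke the central limit theorem: all that is needed is that this probability stays bounded away from $0$ as $\Ntend$, which holds because the corresponding standardized threshold tends to $0$.
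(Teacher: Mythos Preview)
Your argument is correct and follows essentially the same route as the paper: the same mixture decomposition $f=\varepsilon g+(1-\varepsilon)h$ and Bernoulli-thinning coupling, followed by the observation that the binomial count exceeds $\lfloor\varepsilon N\rfloor$ with probability bounded away from $0$. You are simply more explicit than the paper, which handles the second assertion in one line as ``a simple rewriting'' of the first and leaves the binomial tail bound as ``bounded away from $0$'' rather than invoking the median.
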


\begin{proof}
Let us deal with the first assertion of the lemma. We can write
\begin{equation*}
    f = \varepsilon g + (1-\varepsilon)h
\end{equation*}
for some other probability density $h$ on the unit square. 
The idea is to use a coupling between those densities. 
Let $N\in\bN^*$ and $B_1,\dots,B_N$ be i.i.d.~Bernoulli variables of parameter $\varepsilon$, $Y_1,\dots,Y_N$ be i.i.d.~random points distributed under density $g$, and $Z_1,\dots,Z_N$ be i.i.d.~random points distributed under density $h$, all independent. 
Then define for all $i$ between $1$ and $N$:
\begin{equation*}
    X_i := Y_i B_i + Z_i (1-B_i).
\end{equation*}
It is clear that $X_1,\dots,X_N$ are distributed as $N$ i.i.d.~points under density $f$. 
Let $I$ be the set of indices $i$ for which $B_i=1$. Then
\begin{equation*}
    \LIS{X_1,\dots,X_N} \ge \LIS{Y_i,i\in I} .
\end{equation*}
Hence, if $S_N$ denotes an independent binomial variable with parameter $(N,\varepsilon)$:
\begin{equation*}
    \expec{\LIS{\sample{N}{\mu_f}}} \ge
    \expec{\LIS{\sample{S_N}{\mu_g}}} \ge
    \expec{\LIS{\sample{\lfloor \varepsilon N \rfloor}{\mu_g}}}
    \prob{S_N \ge \varepsilon N}
\end{equation*}
where the latter is bounded away from $0$. 
This concludes the proof of the first assertion. 
The second one is a simple rewriting of it.
\end{proof}

\begin{lemma}\label{lem_maj_sum_density}
Suppose $f,g,h$ are probability densities on $\carre$ such that $f \le c_1 g+ c_2 h$ for some $c_1, c_2>0$. Then
\begin{equation*}
    \expec{\LIS{\sample{N}{\mu_f}}} = \underset{N\to\infty}{\cO}\left( 
    \expec{\LIS{\sample{\lceil M N\rceil}{\mu_g}}} + \expec{\LIS{\sample{\lceil M N\rceil}{\mu_h}}}
    \right)
\end{equation*}
for some constant $M>0$.
\end{lemma}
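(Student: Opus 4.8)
The plan is to reduce Lemma \ref{lem_maj_sum_density} to Lemma \ref{lem_maj_density} by recognizing that $c_1 g + c_2 h$ is, up to normalization, a probability density that is itself a convex combination of $g$ and $h$. Concretely, write $c := c_1 + c_2$ and set $p := c_1/c$, so that $k := p\,g + (1-p)\,h$ is a probability density on $\carre$ with $f \leq c\,k$. By the second assertion of Lemma \ref{lem_maj_density} applied to the pair $(f,k)$ with $M = c$, we get
\begin{equation*}
    \E\left[ \LIS\big(\sample_N(\mu_f)\big) \right] = \underset{\Ntend}{O}\left( \E\left[ \LIS\big(\sample_{\lceil c N\rceil}(\mu_k)\big) \right] \right).
\end{equation*}

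Next I would bound $\E\left[ \LIS\big(\sample_{M'}(\mu_k)\big) \right]$, where $M' := \lceil c N \rceil$, in terms of the analogous quantities for $g$ and $h$. The natural approach is the same coupling idea used in the proof of Lemma \ref{lem_maj_density}: sample $M'$ i.i.d.\ points under $k$ by first drawing i.i.d.\ Bernoulli($p$) variables $B_1,\dots,B_{M'}$ and then, for each $i$, an independent point $Y_i$ under $g$ if $B_i = 1$ and $Z_i$ under $h$ if $B_i = 0$. Writing $I$ for the set of indices with $B_i = 1$, any increasing subset of the $M'$ points splits into an increasing subset of $\{Y_i : i \in I\}$ and an increasing subset of $\{Z_i : i \notin I\}$, so
\begin{equation*}
    \LIS(X_1,\dots,X_{M'}) \leq \LIS(Y_i, i\in I) + \LIS(Z_i, i\notin I).
\end{equation*}
Taking expectations and conditioning on the sizes $|I| \sim \mathrm{Bin}(M',p)$ and $M' - |I| \sim \mathrm{Bin}(M',1-p)$, and using that $n \mapsto \E[\LIS(\sample_n(\mu_g))]$ is nondecreasing in $n$ (adding a point cannot decrease the longest increasing subset), we can dominate $|I| \leq M'$ and $M'-|I| \leq M'$ to obtain
\begin{equation*}
    \E\left[ \LIS\big(\sample_{M'}(\mu_k)\big) \right] \leq \E\left[ \LIS\big(\sample_{M'}(\mu_g)\big) \right] + \E\left[ \LIS\big(\sample_{M'}(\mu_h)\big) \right].
\end{equation*}
Since $M' = \lceil cN \rceil$, combining this with the displayed $O$-bound above yields the claim with $M = c = c_1 + c_2$. (One could also absorb the ceiling more carefully, but since we only need an $O$-statement with \emph{some} constant $M$, $M = c_1 + c_2$ suffices.)

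I do not anticipate a genuine obstacle here; the only points requiring a line of care are (i) checking that $k$ is indeed a probability density, which is immediate since $\int k = p\int g + (1-p)\int h = 1$ and $k \geq 0$; (ii) justifying the monotonicity of $n \mapsto \E[\LIS(\sample_n(\mu_g))]$, which follows from the coupling in which $\sample_{n+1}(\mu_g)$ is built by adding one extra i.i.d.\ point to the configuration defining $\sample_n(\mu_g)$; and (iii) the elementary inequality that the longest increasing subset of a union of two point sets is at most the sum of the longest increasing subsets of each set, since any up-right path through the union restricts to an up-right path through each part. The slightly delicate bookkeeping is only that the random sample size in the coupling must be handled by monotonicity rather than by a concentration argument — but unlike Lemma \ref{lem_maj_density}, where we needed a lower bound (hence a concentration estimate $\P(S_N \geq \varepsilon N) \gg 1$), here we want an upper bound, so crude domination of the binomial sizes by $M'$ is all that is needed.
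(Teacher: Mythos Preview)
Your proof is correct and follows essentially the same approach as the paper: normalize $c_1 g + c_2 h$ to a mixture density, apply Lemma~\ref{lem_maj_density}, and then use the Bernoulli coupling together with the subadditivity of $\LIS$ over unions. The only cosmetic difference is that the paper samples all $Y_i$ and $Z_i$ upfront and bounds directly by $\LIS(Y_1,\dots,Y_{M'})+\LIS(Z_1,\dots,Z_{M'})$, thereby avoiding the (equally valid) monotonicity step you invoke.
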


\begin{proof}
First write $c_1 g + c_2 h = M (\lambda g + (1-\lambda)h)$ with appropriate $M>0$ and $\lambda\in ]0,1[$. Applying \Cref{lem_maj_density} gives us:
\begin{equation*}
    \expec{ \LIS{\sample{N}{\mu_f}} } = \underset{N\to\infty}{\cO}\left( \expec{ \LIS{\sample{\lceil M N\rceil}{\mu_{\lambda g + (1-\lambda)h}}} } \right) .
\end{equation*}
We once again use a coupling argument. 
Let $N\in\bN^*$ and $B_1,\dots,B_{\lceil M N\rceil}$ be i.i.d.~Bernoulli variables of parameter $\lambda$, $Y_1,\dots,Y_{\lceil M N\rceil}$ be i.i.d.~random points distributed under density $g$, and $Z_1,\dots,Z_{\lceil M N\rceil}$ be i.i.d.~random points distributed under density $h$, all independent. 
Then define for all integer $i$ between $1$ and $\lceil M N\rceil$:
\begin{equation*}
    X_i := Y_i B_i + Z_i (1-B_i).
\end{equation*}
It is clear that $X_1,\dots,X_{\lceil M N\rceil}$ are distributed as $\lceil M N\rceil$ i.i.d.~points under density $\lambda g + (1-\lambda)h$. 
Moreover
\begin{equation*}
    \LIS{X_1,\dots,X_{\lceil M N\rceil}} 
    \le \LIS{Y_1,\dots,Y_{\lceil M N\rceil}} + \LIS{Z_1,\dots,Z_{\lceil M N\rceil}}
\end{equation*}
whence
\begin{equation*} 
    \expec{ \LIS{\sample{\lceil M N\rceil}{\mu_{\lambda g + (1-\lambda)h}}} } 
    \le \expec{ \LIS{\sample{\lceil M N\rceil}{\mu_{g}}} } 
    + \expec{ \LIS{\sample{\lceil M N\rceil}{\mu_{h}}} } .
\end{equation*}
This concludes the proof.
\end{proof}

Before moving on, we explain how to deduce \Cref{coro_cont} from \Cref{lem_maj_density}.

\begin{proof}[Proof of \Cref{coro_cont}]
    Since $f$ is continuous on $\carre$, there exists $M>0$ satisfying $f\le M$. 
    Using \Cref{Ulam} and \Cref{lem_maj_density} we get:
    \begin{equation*}
        \expec{ \LIS{\sample{N}{\mu_f}} } 
        = \cO\left( \ell_{\lceil MN \rceil} \right) 
        = \cO\left( \sqrt{N} \right)
    \end{equation*}
    as $N\to\infty$, where $\ell$ is defined in (\ref{def_ell_N}). 
    Then, $f$ also being non-zero, there exists $\varepsilon>0$ and a square box $C$ contained in $\carre$ such that $f\ge\varepsilon$ on $C$. 
    Since random points uniformly sampled in $C$ yield uniformly random permutations, \Cref{Ulam} and \Cref{lem_maj_density} imply:
    \begin{equation*}
        \expec{ \LIS{\sample{N}{\mu_f}} }
        = \Omega\left( \expec{ \LIS{\sample{\lfloor\varepsilon \Leb(C)N\rfloor}{\Leb_C}} } \right)
        = \Omega\left( \ell_{\lfloor\varepsilon \Leb(C)N\rfloor} \right) 
        = \Omega\left( \sqrt{N} \right)
    \end{equation*}
    as $N\to\infty$, where $\Leb(C)$ denotes the Lebesgue measure of $C$. 
    We have thus proved the desired estimate.
\end{proof}

Similar coupling techniques were already present at least in \cite[Lemma 7]{DZ95} for locally uniform permutations, and in \cite[Lemma 4.2 and Corollary 4.3]{MS11} for Mallows permutations. 
In the context of these articles, comparison with a uniform density on small boxes was possible. 
Here, to take into account the divergent behavior of our densities, we either use a global comparison with the density $\rho_{\beta,\gamma}^\nearrow$ to prove \Cref{ODG_divergence_integrable,ODG_divergence_pinched}, or make the size of the boxes depend on the number of sampled points to prove \Cref{ODG_divergence_along}.

\section{Study of reference permutons}\label{section_ref}

\subsection{Preliminaries}

The proof of \Cref{ODG_reference} hinges on the estimation of binomial variables. 
We thus state a concentration inequality usually referred to as Bernstein's inequality.
If $\mathcal{S}_n$ denotes a binomial variable of parameter $(n,p)$, then:

\begin{lemma}\label{Bernstein}
For all $t>0$:
    \begin{equation*}
    \prob{ \abs{\mathcal{S}_n - np} \ge t }
    \le 2\exp\left(-\frac{-t^2/2}{np(1-p)+t/3}\right) .
    \end{equation*}
\end{lemma}

See \cite[Equation (8)]{B62} for an easy-to-find reference and discussion on improvements, or \cite{B27} for the original one.
\bigskip

Now let us recall some asymptotics related to the sequence $(u_k)_{k\in\bN^*}$ introduced in \Cref{section_theoremes}. 
A short proof is included for completeness.
\begin{lemma}\label{equivalent_ref}
For any $\beta>1$ and $\gamma\in\bR$ we have
\begin{equation*}
    \sum_{k\ge n}k^{-\beta}\log(k+1)^\gamma
    \underset{n\to\infty}{\sim} \frac{n^{1-\beta}}{\beta-1}\log(n)^\gamma .
\end{equation*}
Moreover for any $\beta'<1$:
\begin{equation*}
    \sum_{k=1}^n k^{-\beta'}\log(k+1)^\gamma \underset{n\to\infty}{\sim} \frac{n^{1-\beta'}}{1-\beta'}\log(n)^\gamma .
\end{equation*}
\end{lemma}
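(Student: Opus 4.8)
The plan is to treat both sums as Riemann-type sums and compare them to integrals. For the first sum, write $\sum_{k\geq n}k^{-\beta}\log(k+1)^\gamma$ and compare it with $\int_n^\infty t^{-\beta}\log(t)^\gamma\,dt$. The comparison is standard: the function $t\mapsto t^{-\beta}\log(t+1)^\gamma$ is eventually monotone decreasing (for $t$ large, since $\beta>1>0$ dominates the slowly varying logarithmic factor), so for $n$ large the tail sum is squeezed between $\int_n^\infty$ and $\int_{n-1}^\infty$ of the function, and both integrals are asymptotically equivalent. Then I would evaluate $\int_n^\infty t^{-\beta}\log(t)^\gamma\,dt$ asymptotically: substituting or integrating by parts once shows it is $\sim \frac{n^{1-\beta}}{\beta-1}\log(n)^\gamma$, because the logarithmic factor is slowly varying (its logarithmic derivative is $O(1/(t\log t))$, negligible against the $t^{-\beta}$ decay). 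Concretely, $\frac{d}{dt}\left(\frac{t^{1-\beta}}{\beta-1}\log(t)^\gamma\right) = -t^{-\beta}\log(t)^\gamma + \frac{\gamma}{\beta-1}t^{-\beta}\log(t)^{\gamma-1}$, and the second term is lower order, which gives the claimed equivalent for the integral and hence for the sum.

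For the second sum with $\beta'<1$, the function $t\mapsto t^{-\beta'}\log(t+1)^\gamma$ is eventually increasing (since the exponent $1-\beta'>0$ when we integrate), so I would again sandwich $\sum_{k=1}^n$ between $\int_0^n$ and $\int_1^{n+1}$ of the function, the finitely many initial terms and the lower endpoint contributing only $O(1)$ or lower-order terms that are absorbed since $n^{1-\beta'}\log(n)^\gamma\to\infty$. Evaluating $\int^n t^{-\beta'}\log(t)^\gamma\,dt$ by the same integration-by-parts computation gives $\sim\frac{n^{1-\beta'}}{1-\beta'}\log(n)^\gamma$, with the correction term again of order $n^{1-\beta'}\log(n)^{\gamma-1}$, which is negligible.

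The only mild subtlety, which I would address briefly, is the discrepancy between $\log(k+1)$ in the sum and $\log(t)$ in the integral: since $\log(k+1)/\log(k)\to 1$, replacing one by the other does not affect the asymptotics, so I would either work directly with $\log(t+1)$ in the integrand throughout or remark that the substitution is harmless. There is no real obstacle here; the main point to get right is simply that the logarithmic factor is slowly varying so that the naive "pull the $\log$ out of the integral" heuristic is rigorous up to a lower-order error, and that the boundary/initial-segment contributions are genuinely negligible against the divergent (or, in the first case, the leading) main term.
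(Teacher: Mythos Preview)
Your approach is essentially the same as the paper's: integral comparison followed by an integration by parts showing that the logarithmic correction is of lower order. The paper writes this a bit more tersely (it simply asserts the integral comparison and then integrates by parts once), but the content is identical.

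One small correction: in the second part you claim that $t\mapsto t^{-\beta'}\log(t+1)^\gamma$ is eventually increasing ``since the exponent $1-\beta'>0$ when we integrate''. That reasoning conflates the integrand with its antiderivative. For $0<\beta'<1$ the integrand is in fact eventually \emph{decreasing} (the $t^{-\beta'}$ factor wins over the slowly varying logarithm). This does not harm your argument, since integral comparison works for eventually monotone functions of either type; just adjust which side of the sandwich gives which bound, or simply note eventual monotonicity without specifying the direction.
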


\begin{proof}
First use the integral comparison:
\begin{equation*}
    \sum_{k\ge n}k^{-\beta}\log(k+1)^\gamma 
    \underset{n\to\infty}{\sim} \int_n^{+\infty}x^{-\beta}\log(x+1)^\gamma dx
\end{equation*}
and then an elementary integration by parts
\begin{align*}
    \int_n^{+\infty}x^{-\beta}\log(x+1)^\gamma dx
    &= \frac{n^{1-\beta}}{\beta-1}\log(n+1)^\gamma
    - \int_n^{+\infty}\frac{x^{1-\beta}}{1-\beta}\frac{\gamma}{x+1}\log(x+1)^{\gamma-1} dx
    \\&= \frac{n^{1-\beta}}{\beta-1}\log(n+1)^\gamma
    + \frac{\gamma}{\beta-1}\int_n^{+\infty}\frac{x^{1-\beta}}{x+1}\log(x+1)^{\gamma-1} dx.
\end{align*}
However, the following holds:
\begin{equation*}
    \int_n^{+\infty}\frac{x^{1-\beta}}{x+1}\log(x+1)^{\gamma-1} dx
    = o\left( \int_n^{+\infty}x^{-\beta}\log(x+1)^{\gamma} dx \right)
    \quad\text{as }n\to\infty.
\end{equation*}
This concludes the proof of the first assertion. 
The second one is analogous.
\end{proof}

\subsection{Proof of Proposition \ref{ODG_reference}}

In this section we fix $\beta>1$ and $\gamma\ge 0$ and prove \Cref{ODG_reference}. 
Consider $N\in\bN^*$ and write
\begin{equation*}
L_N := \LIS{\sample{N}{\mu_{\beta,\gamma}^\nearrow}}.
\end{equation*}
Let $X_1,\dots,X_N$ be i.i.d.~random variables distributed under $\mu_{\beta,\gamma}^\nearrow$.
For each $k\in\bN^*$, define
\begin{equation*}
    \mathcal{X}_{N,k} := \lbrace X_1,\dots,X_N \rbrace \cap C_k
\end{equation*}
and let $N_k$ be the cardinal of $\mathcal{X}_{N,k}$, \textit{i.e.}~the number of points appearing in box $C_{k}$. 
Each $N_k$ is a binomial variable with parameter $(N,u_{k})$, and almost surely
\begin{equation*}
    \sum_{k\ge 1}N_k = N.
\end{equation*}
Conditionally on $N_k$, the set $\mathcal{X}_{N,k}$ consists of $N_k$ uniformly random points in $C_{k}$.
Moreover, almost surely:
\begin{equation*}
    \LIS{X_1,\dots,X_N} = \sum_{k\ge 1}\LIS{\mathcal{X}_{N,k}}
\end{equation*}
thanks to the boxes being placed in an up-right fashion. 
Hence by taking expectation in the previous line, one obtains
\begin{equation*}
    \expec{ L_N } = \sum_{k\ge 1} \expec{ \ell_{N_k} }
\end{equation*}
with the notation of (\ref{def_ell_N}). 
For some integer $k_N$ to be determined, we will use the following bounds:
\begin{equation}\label{encadrement_LIS_ref}
    \sum_{k= 1}^{k_N} \expec{ \ell_{N_k} }
    \le 
    \expec{ L_N }
    \le
    \sum_{k= 1}^{k_N} \expec{ \ell_{N_k} } + N\sum_{k>k_N}u_k
\end{equation}
where the right hand side was obtained by simply bounding each $\ell_{N_k}$ for $k>k_N$ with $N_k$.
Using \Cref{Ulam}, fix an integer $n_0$ such that
\begin{equation}\label{encadrement_Ulam}
    \text{for all } n\ge n_0 , \quad
    \sqrt{n} \le \ell_n \le 3\sqrt{n}.
\end{equation}
The number $k_N$ appearing in \Cref{encadrement_LIS_ref} must be chosen big enough for the bounds to be tight, but also small enough for (\ref{encadrement_Ulam}) to be used. 
By applying Bernstein's inequality (\Cref{Bernstein}) here with an appropriate choice of parameter, we obtain for any $N,k\in\bN^*$:
\begin{multline}\label{Bernstein_Nk_ref}
    \prob{ \abs{N_k - Nu_{k}} \ge \log(N)^2\sqrt{Nu_{k}} }
    \le 2\exp\left( -\psi_{N,k} \right)
    \;\text{ where } \:
    \psi_{N,k} := \frac{\log(N)^4Nu_{k}/2}{Nu_{k}(1-u_{k})+\log(N)^2\sqrt{Nu_{k}}/3}.
\end{multline}
We will investigate the term $\psi_{N,k}$ later on, and obtain the simple upper bound (\ref{maj_psi}) for adequate values of $k$.
To apply (\ref{encadrement_Ulam}) and (\ref{Bernstein_Nk_ref}) we are looking, for each positive integer $N$, for $k_N$ satisfying
\begin{equation}\label{condition_kN}
    \text{for any } k\in\llbracket 1,k_N\rrbracket ,\quad
    Nu_{k} - \log(N)^2\sqrt{Nu_{k}} \ge n_0.
\end{equation}

\begin{lemma}\label{lem_kN}
Condition (\ref{condition_kN}) holds true for some $k_N = \underset{N\to+\infty}{\Theta}\left( \log(N)^{-4/\beta}N^{1/\beta} \right)$.
\end{lemma}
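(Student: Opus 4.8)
The plan is to analyze condition \eqref{condition_kN} directly. Since $Nu_k - \log(N)^2\sqrt{Nu_k} = \sqrt{Nu_k}\left(\sqrt{Nu_k} - \log(N)^2\right)$, and since $u_k$ is decreasing in $k$, the left-hand side of \eqref{condition_kN} is essentially decreasing in $k$ once $\sqrt{Nu_k}$ exceeds $\log(N)^2$; more precisely, the map $t \mapsto t - \log(N)^2\sqrt{t}$ is increasing for $t \geq \tfrac14\log(N)^4$. So the natural strategy is: first show that for the candidate value of $k_N$ we have $Nu_{k_N}$ large (much bigger than $\log(N)^4$ and than $n_0$), which makes $t\mapsto t-\log(N)^2\sqrt t$ increasing on the relevant range; then conclude that $Nu_k - \log(N)^2\sqrt{Nu_k} \geq Nu_{k_N} - \log(N)^2\sqrt{Nu_{k_N}} \geq n_0$ for all $k \leq k_N$.

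First I would pin down the candidate. Recall $u_k = \tfrac{1}{Z_{\beta,\gamma}}k^{-\beta}\log(k+1)^\gamma$, so $Nu_k \geq n_0 + \log(N)^2\sqrt{Nu_k}$ is, up to constants and logarithmic factors, the inequality $Nk^{-\beta} \gtrsim \log(N)^2 N^{1/2}k^{-\beta/2}$, i.e. $k^{-\beta/2} \gtrsim \log(N)^2 N^{-1/2}$, i.e. $k \lesssim \log(N)^{-4/\beta}N^{1/\beta}$. This suggests defining $k_N := \big\lfloor c\,\log(N)^{-4/\beta}N^{1/\beta}\big\rfloor$ for a small enough absolute constant $c>0$ (taken so that the required inequalities hold for all large $N$, and setting $k_N=0$ for the finitely many small $N$ where they fail). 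With this choice, using $\log(k_N+1) = \Theta(\log N)$ (valid since $k_N$ is polynomial in $N$), one gets $Nu_{k_N} = \Theta\big(N \cdot \log(N)^{4}N^{-1}\log(N)^{-\gamma}\cdot \log(N)^\gamma \cdot c^{-\beta}\big)$ — wait, more carefully: $Nu_{k_N} \asymp N k_N^{-\beta}\log(k_N)^\gamma \asymp N \cdot \log(N)^{4}N^{-1} c^{-\beta} \log(N)^{-\gamma}\cdot\log(N)^\gamma = c^{-\beta}\log(N)^{4}$, which is $\gg \log(N)^2$, hence $\sqrt{Nu_{k_N}} \asymp c^{-\beta/2}\log(N)^2$. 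Choosing $c$ small makes this at least, say, $2\log(N)^2$, so $Nu_{k_N} - \log(N)^2\sqrt{Nu_{k_N}} \geq \tfrac12 Nu_{k_N} \gg n_0$ for $N$ large.

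Then I would verify monotonicity on the range $k \in \lb 1, k_N\rb$: since $Nu_k \geq Nu_{k_N} \geq \tfrac14\log(N)^4$ for every such $k$ (for $N$ large, by the above), and $t \mapsto t - \log(N)^2\sqrt t$ is nondecreasing for $t \geq \tfrac14\log(N)^4$, we get $Nu_k - \log(N)^2\sqrt{Nu_k} \geq Nu_{k_N} - \log(N)^2\sqrt{Nu_{k_N}} \geq n_0$, which is exactly \eqref{condition_kN}. Finally, the order claim $k_N = \Theta\big(\log(N)^{-4/\beta}N^{1/\beta}\big)$ is immediate from the definition of $k_N$ as a floor of a constant times that quantity. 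The only mildly delicate point — the main obstacle, such as it is — is keeping track of the $\log(k_N)$ versus $\log(N)$ comparison and the constants hidden in $Z_{\beta,\gamma}$ and in the $\Theta$ for $u_k$, so that the chosen constant $c$ genuinely makes all inequalities hold simultaneously for all sufficiently large $N$; but since $k_N$ is polynomially large in $N$ this comparison is harmless ($\log k_N \sim \tfrac1\beta\log N$), and everything reduces to a one-line power-counting check.
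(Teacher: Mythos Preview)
Your approach is correct and close in spirit to the paper's, though the execution differs in two places worth flagging. First, you assert that $u_k$ is decreasing in $k$; when $\gamma>0$ this can fail for the first few indices, since $k^{-\beta}\log(k+1)^\gamma$ may initially increase. The fix is easy: $u_k$ is decreasing for $k$ beyond some fixed threshold depending only on $(\beta,\gamma)$, and the finitely many early $u_k$'s are bounded below by a positive constant while $u_{k_N}\to 0$, so $u_k\ge u_{k_N}$ for all $k\le k_N$ does hold once $N$ is large. Second, your computation of $Nu_{k_N}$ has a slip: since $\log(k_N+1)\asymp\log N$, the correct order is $Nu_{k_N}\asymp c^{-\beta}\log(N)^{4+\gamma}$, not $c^{-\beta}\log(N)^4$. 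This is harmless because $\gamma\ge 0$ in this section, so the quantity is still $\gg\log(N)^4$ and the rest of your argument goes through unchanged.

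The paper's own proof avoids the monotonicity discussion altogether: it rewrites $Nu_k-\log(N)^2\sqrt{Nu_k}\ge n_0$ as a quadratic inequality in $u_k$, identifies the larger root $x_N\sim\log(N)^4/N$, and then uses the uniform lower bound $u_k\ge c_0k^{-\beta}$ with $c_0=\log(2)^\gamma/Z_{\beta,\gamma}$ (valid precisely because $\gamma\ge 0$) to convert $u_k\ge x_N$ into $k\le c_0^{1/\beta}x_N^{-1/\beta}$. Your route via monotonicity of $t\mapsto t-\log(N)^2\sqrt t$ on $[\tfrac14\log(N)^4,\infty)$ is equally valid and arguably more direct; the paper's quadratic-root approach buys a cleaner handling of the sequence $(u_k)$ at the price of a short algebraic detour.
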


From now on, we choose $k_N$ as in \Cref{lem_kN}
(note that $k_N$ may be zero for small values of $N$).
The proof of this lemma is postponed to the end of this section.
Now, let us study the probability error term in (\ref{Bernstein_Nk_ref}). 
For any positive integer $k$ lower than or equal to $k_N$, one of the following holds:
\begin{itemize}
    \item[$\bullet$] If $Nu_{k}(1-u_{k})\ge \log(N)^2\sqrt{Nu_{k}}/3$ then
    \begin{equation*}
        \psi_{N,k} \ge \frac{\log(N)^4 Nu_{k}/2}{2Nu_{k}(1-u_{k})} \ge \frac{\log(N)^4}{4}.
    \end{equation*}
    \item[$\bullet$] Otherwise
    \begin{equation*}
        \psi_{N,k} \ge \frac{\log(N)^4 Nu_{k}/2}{2\log(N)^2\sqrt{Nu_{k}}/3}
        = \frac{3}{4}\log(N)^2\sqrt{Nu_{k}} \ge \frac{3}{4}\log(N)^2\sqrt{n_0}
    \end{equation*}
    where we used \Cref{condition_kN} in the last inequality.
\end{itemize}
Hence there exists a constant $\delta>0$ such that, for all $N\in\bN^*$:
\begin{equation}\label{maj_psi}
    \sup_{1\le k\le k_N}\exp\left( -\psi_{N,k} \right) \le \exp\left( -\delta\log(N)^2 \right).
\end{equation}
To study the bounds of (\ref{encadrement_LIS_ref}), we distinguish between the cases $\beta \in ]1,2[$ and $\beta\ge 2$.
\bigskip

\underline{First suppose $\beta \in ]1,2[$}.
Let us begin with the upper bound of (\ref{encadrement_LIS_ref}). 
On the one hand:
\begin{equation}\label{maj_reste_ref}
    N\sum_{k>k_N}u_k = \Theta\left( N k_N^{1-\beta} \log(k_N)^\gamma\right) = \Theta\left( \log(N)^{4-4/\beta+\gamma}N^{1/\beta} \right)
\end{equation}
as $N\to +\infty$, using \Cref{equivalent_ref,lem_kN}. 
On the other hand for each $k\in \llbracket 1,k_N\rrbracket$:
\begin{align*}
    \expec{\ell_{N_k}}
    &= \expec{ \ell_{N_k}\mathbf{1}_{\abs{N_k - Nu_k} < \log(N)^2\sqrt{Nu_k}} }
    + \expec{ \ell_{N_k}\mathbf{1}_{\abs{N_k - Nu_k} \ge \log(N)^2\sqrt{Nu_k}} }\\
    &\le 3\sqrt{Nu_k+\log(N)^2\sqrt{Nu_k}}
    \;+\; N \prob{ \abs{N_k - Nu_k} \ge \log(N)^2\sqrt{Nu_k} }\\
    &\le 3\sqrt{Nu_k}+3\log(N)(Nu_k)^{1/4}
    \;+\; 2N \exp\left( -\delta\log(N)^2 \right)
\end{align*}
where we used \Cref{condition_kN,encadrement_Ulam,Bernstein_Nk_ref,maj_psi}, and the inequality $\sqrt{a+b}\le \sqrt{a}+\sqrt{b}$ for any $a,b\ge 0$.
Summing and using \Cref{equivalent_ref,lem_kN}, we get
\begin{align}\label{maj_somme_ref}
    \sum_{k= 1}^{k_N} \expec{ \ell_{N_k} }
    &\le 3\sqrt{N}\sum_{k=1}^{k_N}\sqrt{u_k}
    + 3\log(N)N^{1/4}\sum_{k=1}^{k_N}u_k^{1/4}
    + 2N k_N \exp\left( -\delta\log(N)^2 \right)
    \\&\nonumber = 3\sqrt{N}
    \Theta\left( k_N^{1-\beta/2}\log(k_N)^{\gamma/2} \right)
    + 3\log(N)N^{1/4}
    \Theta\left( k_N^{1-\beta/4}\log(k_N)^{\gamma/4} \right)
    + o(1)
    \\&\nonumber = \Theta\left( 
    \log(N)^{2-4/\beta+\gamma/2}
    N^{1/\beta} 
    \right).
\end{align}
as $N\to +\infty$. This last upper bound along with (\ref{maj_reste_ref}) yields, in (\ref{encadrement_LIS_ref}):
\begin{equation}\label{maj_LN_ref}
    \expec{L_N} = 
    \cO\left( \log(N)^{4-4/\beta+\gamma}N^{1/\beta}\right) 
    \quad\text{as }N\to\infty.
\end{equation}\medskip

Now let us turn to the lower bound of (\ref{encadrement_LIS_ref}), for which the calculations are simpler.
For any $k\in\llbracket1,k_N\rrbracket$:
\begin{align*}
    \expec{\ell_{N_k}}
    \ge \expec{ \ell_{N_k}\mathbf{1}_{\abs{N_k - Nu_k} < \log(N)^2\sqrt{Nu_k}} }
    \ge \sqrt{n_0}
    \left(1-2\exp(-\psi_{N,k})\right)
\end{align*}
using \Cref{condition_kN,encadrement_Ulam,Bernstein_Nk_ref}.
Then by summing and using \Cref{lem_kN} and Eq.~(\ref{maj_psi}):
\begin{align*}
    \expec{L_N}
    \ge k_N\sqrt{n_0}\left(1-2\exp\left( -\delta\log(N)^2 \right)\right)
    = \Omega\left( 
    \log(N)^{-4/\beta}
    N^{1/\beta} \right)
\end{align*}
as $N\to+\infty$. 
This lower bound, along with (\ref{maj_LN_ref}), concludes the proof of \Cref{ODG_reference} when $\beta\in]1,2[$.
\bigskip

\underline{Now suppose $\beta \ge 2$.}
The upper bound is very similar to the case $\beta\in]1,2[$, but with the appropriate asymptotics. 
Namely for any $\beta'\ge 1$ and $\gamma'\ge 0$ one has:
\begin{equation}\label{equivalent_bertrand_integrable}
    \sum_{k=1}^n k^{-\beta'}\log(k+1)^{\gamma'} =\cO\left( \log(n)^{1+\gamma'} \right) 
    \quad\text{as }n\to\infty.
\end{equation}
On the one hand (\ref{maj_reste_ref}) still holds and we can thus write
\begin{equation}\label{maj_reste_ref_grand_expo}
    N\sum_{k>k_N}u_k = \OT\left( \sqrt{N} \right)\quad\text{as }N\to\infty.
\end{equation}
On the other hand the first line of (\ref{maj_somme_ref}) is still valid and we obtain, as $N\to\infty$:
\begin{align*}
    \sum_{k= 1}^{k_N} \expec{ \ell_{N_k} }
    &\le 3\sqrt{N}\sum_{k=1}^{k_N}\sqrt{u_k}
    + 3\log(N)N^{1/4}\sum_{k=1}^{k_N}u_k^{1/4}
    + 2N k_N \exp\left( -\delta\log(N)^2 \right)
    \\&\le \sqrt{N}
    \cO\left(\log(N)^{1+\gamma/2}\right) + o(1)
    \\&\quad + \log(N)N^{1/4}
    \max\left( \cO\left(\log(N)^{1+\gamma/4}\right) , \Theta\left( k_N^{1-\beta/4}\log(k_N)^{\gamma/4} \right) \right)
    \\&= \cO\left( \log(N)^{1+\gamma/2}\sqrt{N} \right)
    \\&\quad + \max\left(
    \cO\left( \log(N)^{2+\gamma/4}N^{1/4} \right) , \Theta\left( \log(N)^{2-4/\beta+\gamma/4}N^{1/\beta} \right)
    \right)
    \\&= \OT\left(\sqrt{N}\right)
\end{align*}
using \Cref{lem_kN} and Eq.~(\ref{equivalent_bertrand_integrable}) and distinguishing between the cases $\beta\ge4$ and $\beta<4$ on the second line. 
Along with (\ref{maj_reste_ref_grand_expo}), this yields the desired upper bound when injected in (\ref{encadrement_LIS_ref}).
\medskip

The lower bound, on the contrary, requires no calculation. 
Indeed, bound below $\rho_{\beta,\gamma}^\nearrow$ by $u_1f$ where $f$ denotes the uniform density on the square $C_1$. 
Since permutations sampled from the density $f$ are uniform, we deduce from \Cref{Ulam} and \Cref{lem_maj_density} that
\begin{equation*}
    \expec{ \LIS{\sample{N}{\mu_{\beta,\gamma}^\nearrow}} }
    =\underset{N\to\infty}{\Omega}\left(\ell_{\lfloor u_1N \rfloor}\right)
    =\underset{N\to\infty}{\Omega}\left(\sqrt{N}\right).
\end{equation*}\medskip

All that is left for the proof of \Cref{ODG_reference} to be complete is the previously announced lemma about $k_N$.

\begin{proof}[Proof of \Cref{lem_kN}]
Let $N\in\bN^*$. 
For each integer $k$:
\begin{align*}
    Nu_k - \log(N)^2\sqrt{Nu_k} \ge n_0
    &\;\Leftrightarrow\;
    \left\{
    \begin{array}{ll}
         Nu_k\ge n_0 ;\\
         Nu_k-n_0 \ge \log(N)^2\sqrt{Nu_k} ;
    \end{array}
    \right.\\
    &\;\Leftrightarrow\;
    \left\{
    \begin{array}{ll}
         Nu_k\ge n_0 ;\\
         N^2 u_k^2 + n_0^2 - 2Nu_k n_0 \ge \log(N)^4Nu_k ;
    \end{array}
    \right.\\
    &\;\Leftrightarrow\;
    \left\{
    \begin{array}{ll}
         u_k\ge n_0/N ;\\
         N^2 u_k^2 - N\left( 2n_0 + \log(N)^4 \right)u_k + n_0^2 \ge 0.
    \end{array}
    \right.
\end{align*}
This last polynomial in the variable $u_k$ has discriminant $\Delta_N = N^2\left(\log(N)^8+4n_0\log(N)^4\right) \ge 0$. Let $x_{N}$ be its greatest root. Then
\begin{equation*}
    x_{N} = \frac{N\left( 2n_0+\log(N)^4\right) +\sqrt{\Delta_N}}{2N^2} \underset{N\to\infty}{\sim} \frac{\log(N)^4}{N}.
\end{equation*}
A sufficient condition for $Nu_k - \log(N)^2\sqrt{Nu_k} \ge n_0$ to hold is $u_k\ge \max\left( n_0/N , x_N \right)$.
However
\begin{equation*}
    u_k = \frac{1}{Z_{\beta,\gamma}}k^{-\beta}\log(k+1)^\gamma \ge c_0 k^{-\beta}
\end{equation*}
for $c_0 := \log(2)^\gamma /Z_{\beta,\gamma}$, so a sufficient condition is
\begin{equation*}
    k \le c_0^{1/\beta}
    \max\left( n_0/N , x_N \right)^{-1/\beta}.
\end{equation*}
Hence the announced estimate for $k_N$.
\end{proof}

\section{Study of densities diverging at a single point}\label{section_last_proofs}

\subsection{Lower bound of Theorem \ref{ODG_divergence_pinched}}\label{section_preuve_lower}

This bound is quite direct thanks to \Cref{lem_maj_density} and the previous study of $\mu_\beta^\nearrow$. 
We will use the notation of \Cref{section_theoremes} with the same $\beta$ as in \Cref{ODG_divergence_pinched} and $\gamma=0$. 
Studying $\rho$ on the boxes $(C_n)_{n\in\bN^*}$ will be enough to obtain the desired lower bound.
Fix $\varepsilon > 0$ and some rank $m_0\in\bN^*$ such that
\begin{equation*}
    \text{for all } n\ge m_0 \text{ and } (x,y)\in C_n ,\quad
    \rho(x,y)\ge \varepsilon
    d^{\frac{\beta}{1-\beta}}
    \exp\left(
    -c |x-y| d^{\frac{\beta}{1-\beta}}
    \right).
\end{equation*}
Recall the notation $d:=\dist((x,y),(1,1))$ for $(x,y)\in\carre$
and write $R_{n} := \sum_{k>n}u_{k}$ for all $n\in\bN$.
Note that, for $(x,y)\in C_n$:
\begin{equation*}
    2R_n \le d \le 2R_{n-1} 
    \quad\text{where}\quad
    R_{n-1} , R_n = \underset{n\to\infty}{\Theta}\left( n^{1-\beta} \right)
\end{equation*}
by \Cref{equivalent_ref}, and
\begin{equation*}
    |x-y| \le u_n = \underset{n\to\infty}{\Theta}\left( n^{-\beta} \right).
\end{equation*}
As a consequence, for potentially different values of $\varepsilon>0$ and $m_0\in\bN^*$ we get
\begin{equation*}
    \text{for all } n\ge m_0 \text{ and } (x,y)\in C_n ,\quad
    \rho(x,y)
    \ge \varepsilon \rho_\beta^\nearrow(x,y).
\end{equation*}
Write $g$ for the probability density on $\cup_{n\ge m_0}C_n$ proportional to $\rho_\beta^\nearrow$. 
Then by \Cref{lem_maj_density}:
\begin{equation*}
    \expec{ \LIS{\sample{N}{\mu_\rho}} } = \underset{N\to\infty}{\Omega}\left( \expec{ \LIS{\sample{\lfloor\varepsilon N\rfloor}{\mu_g}} } \right).
\end{equation*}
Moreover we can obtain
\begin{equation*}
    \expec{ \LIS{\sample{N}{\mu_g}} } 
    = \underset{N\to\infty}{\ThetaT}\left( N^{1/\beta} \right)
\end{equation*}
with the same proof as for the reference permuton $\mu_\beta^\nearrow$ (start every index at $m_0$ instead of $1$). 
Finally:
\begin{equation*}
    \expec{ \LIS{\sample{N}{\mu_\rho}} } = \underset{N\to\infty}{\OmegaT}\left( N^{1/\beta}\right).
\end{equation*}

\subsection{Upper bound of Theorem \ref{ODG_divergence_pinched}}

This bound is more subtle than the previous one. 
Indeed, long increasing subsequences could appear outside of the boxes used in \Cref{section_preuve_lower}.
Our solution comes in two steps: 
first consider slightly bigger boxes, and then add an overlapping second sequence of boxes to make sure a whole neighborhood of the diagonal is covered.

We will mainly use the notation of \Cref{section_theoremes} with the number $\beta$ considered in \Cref{ODG_divergence_pinched} and any negative number
$\gamma < 1-\beta$. 
In addition to the boxes $(C_n)_{n\in\bN^*}$, define for all $n\in\bN^*$:
\begin{equation*}
    D_{n+1} := \left[ S_n - \frac{u_{n+1}}{2} , S_n + \frac{u_{n+1}}{2} \right]^2
    \quad\text{and}\quad
    E_n := [S_{n-1},1]^2 \setminus \left(
    [S_n,1]^2 \cup C_n \cup D_{n+1} 
    \right),
\end{equation*}
and their unions:
\begin{equation*}
    E := \bigcup_{n\ge 1}E_n
    \quad , \quad
    C := \bigcup_{n\ge 1}C_n
    \quad , \quad
    D := \bigcup_{n\ge 1}D_{n+1} .
\end{equation*}

\begin{figure}
\begin{center}
\includegraphics[scale=0.4]{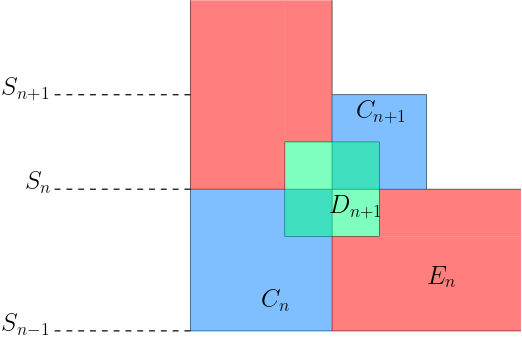}
\caption{The areas used to study density $\rho$ in the upper bound of \Cref{ODG_divergence_pinched}.}\label{fig_decoupage}
\end{center}
\end{figure}

See \Cref{fig_decoupage} for a visual representation. 
Notice how these three areas cover the whole unit square.
Let us check that $\rho$ is small outside the diagonal neighborhood $C\cup D$:
\begin{equation*}
    \text{for all } n\in\bN^* \text{ and } (x,y) \in E_n ,\quad
    |x-y| \ge u_{n+1}/2
    \:\text{ and }\:
    R_n \le \dist\big( (x,y),(1,1) \big) \le 2R_{n-1}
\end{equation*} 
where $R_{n} := \sum_{k>n}u_{k}$.
Then using \Cref{equivalent_ref}, we get as $n\to\infty$ uniformly in $(x,y)\in E_n$:
\begin{equation*}
    |x-y| = \Omega\left( n^{-\beta}\log(n)^\gamma \right)
    \quad\text{and}\quad
    \dist\big( (x,y),(1,1) \big) = \Theta\left( n^{1-\beta}\log(n)^\gamma \right).
\end{equation*}
Our hypothesis on $\rho$ now rewrites
\begin{align*}
    \rho (x,y)
    &= \cO\left(
    \left(
    n^{1-\beta} \log(n)^{\gamma}
    \right)^{\frac{\beta}{1-\beta}}
    \exp\left(
    - \Omega\left(
    n^{-\beta}\log(n)^{\gamma}
    \left(
    n^{1-\beta} \log(n)^{\gamma}
    \right)^{\frac{\beta}{1-\beta}}
    \right)
    \right)
    \right)
    \\&= \cO\left(
    n^\beta \log(n)^{\frac{\gamma\beta}{1-\beta}}
    \exp\left(
    - \Omega\left(
    \log(n)^{\frac{\gamma}{1-\beta}}
    \right)
    \right)
    \right)
    \\&= o(1)
\end{align*}
since $\frac{\gamma}{1-\beta} > 1$ by choice of $\gamma$. 
In particular $\rho$ is bounded on $E$, and it remains to study it on areas $C$ and $D$. 
Using \Cref{equivalent_ref} and bounding the exponential term by $1$, we get as $n\to\infty$ uniformly in $(x,y)\in C_n\cup D_{n+1}$:
\begin{align*}
    \rho(x,y) = \cO\left(
    n^\beta \log(n)^{\frac{\gamma\beta}{1-\beta}}
    \right).
\end{align*}
Consequently
\begin{equation*}
    \int_{C_n}\rho(x,y)dxdy = \cO\left(
    n^{-2\beta}\log(n)^{2\gamma} n^\beta \log(n)^{\frac{\gamma\beta}{1-\beta}}
    \right) = \cO\left(
    n^{-\beta} \log(n)^{\gamma\left( 2-\frac{\beta}{\beta-1} \right)}
    \right)
    \quad\text{as }n\to\infty
\end{equation*}
and likewise
\begin{equation*}
    \int_{D_{n+1}}\rho(x,y)dxdy = \cO\left(
    n^{-\beta} \log(n)^{\gamma\left( 2-\frac{\beta}{\beta-1} \right)}
    \right)
    \quad\text{as }n\to\infty.
\end{equation*}
Define $\gamma' := \gamma\left( 2-\frac{\beta}{\beta-1} \right) >0$. 
The previous calculations show that we can find a bound
\begin{equation*}
    \rho \le M(f+g+h)
\end{equation*}
for some $M>0$, $f$ the uniform density on $\carre$, $g$ a probability density on $C$ attributing uniform mass proportional to $n^{-\beta}\log(n+1)^{\gamma'}$ to each $C_n$, and $h$ a probability density on $D$ giving uniform mass proportional to $n^{-\beta}\log(n+1)^{\gamma'}$ to each $D_{n+1}$. 
Thus by \Cref{lem_maj_sum_density} it suffices to bound the quantities
\begin{equation*}
    \expec{\LIS{\sample{N}{\mu_f}}}
    ,\quad
    \expec{\LIS{\sample{N}{\mu_g}}}
    ,\quad
    \expec{\LIS{\sample{N}{\mu_h}}} .
\end{equation*}
The first term is nothing but the uniform case, so it behaves as $\Theta\left(\sqrt{N}\right)$.
Let us turn to the second term. 
Since the sampled permutations of our reference permutons only depend on the masses attributed to each box and not the sizes of these boxes, sampled permutations from $\mu_g$ have the same law as sampled permutations from $\mu_{\beta,\gamma'}^\nearrow$ (see \Cref{section_discussions}; 
$\mu_{\beta,\gamma'}^\nearrow$ is the permuton associated to the pre-permuton $\mu_g$). 
Hence by \Cref{ODG_reference}, this term behaves as $\ThetaT\left(N^{1/\beta}\right)$. 
The third term is handled in the same way. 
Finally:
\begin{equation*}
    \expec{\LIS{\sample{N}{\mu_\rho}}} = \underset{N\to\infty}{\OT}\left(
    N^{1/\beta}
    \right).
\end{equation*}

\subsection{Proof of Theorem \ref{ODG_divergence_integrable}}

This section is devoted to the proof of \Cref{ODG_divergence_integrable}. 
We thus consider $\alpha>-2$ and suppose $\rho$ is as in the theorem. 
Since we want an upper bound on the longest increasing subsequences, we need to find appropriate areas to bound $\rho$ on. 
To this aim, define $\beta>1$ by
\begin{equation}\label{def_beta}
    -\alpha = 2 - \frac{1}{\beta - 1}
    \quad\textit{ i.e.}\quad
    \alpha(1-\beta) + 1 - 2\beta = -2.
\end{equation}
We use the notation of \Cref{section_theoremes} for this value of $\beta$ and $\gamma=0$. 
We shall bound $\rho$ on the boxes $(C_n)_{n\in\bN^*}$ as well as on the adjacent rectangles:
\begin{equation*}
    \text{for all } n\in\bN^*,\quad
    D_n^{(1)} := [S_{n-1},S_n]\times [S_n,1]
    \quad\text{and}\quad
    D_n^{(2)} := [S_n,1]\times [S_{n-1},S_n] .
\end{equation*}
The sequences $(C_n)_{n\in\bN^*},(D_n^{(1)})_{n\in\bN^*},(D_n^{(2)})_{n\in\bN^*}$ form a partition of the unit square. 
As in the upper bound of \Cref{ODG_divergence_pinched}, we need to compute the masses attributed by $\rho$ to each of these boxes. 
For this notice that 
\begin{equation*}
    \text{for all } n\in\bN^* \text{ and } (x,y)\in C_n\cup D_n^{(1)}\cup D_n^{(2)},\quad
    R_n \le \dist((x,y),(1,1)) \le 2R_{n-1}
\end{equation*}
where $R_{n} := \sum_{k>n}u_{k}$.
Hence
\begin{equation*}
    \int_{C_n}\rho(x,y)dxdy = \underset{}{\Theta}\left( n^{\alpha(1-\beta)}n^{-2\beta} \right)
    = \underset{}{\Theta}\left( n^{-3} \right)
    \quad\text{as }n\to\infty
\end{equation*}
and
\begin{equation*}\label{mass_on_side_rectangles}
    \text{for } i=1,2 ,\quad 
    \int_{D_n^{(i)}}\rho(x,y)dxdy = \underset{}{\Theta}\left( n^{\alpha(1-\beta)}n^{-\beta}n^{1-\beta} \right)
    = \underset{}{\Theta}\left( n^{-2} \right)
    \quad\text{as }n\to\infty
\end{equation*}
by (\ref{def_beta}). Using \Cref{lem_maj_sum_density}, it suffices to bound the quantities
\begin{equation*}
    \expec{\LIS{\sample{N}{\mu_f}}}
    ,\quad
    \expec{\LIS{\sample{N}{\mu_g}}}
    ,\quad
    \expec{\LIS{\sample{N}{\mu_h}}}
\end{equation*}
where $f$ is the probability density on $C$ attributing uniform mass proportional to $n^{-3}$ to each $C_n$ and $g$ (resp.~$h$) is the probability density on $D^{(1)}$ (resp.~$D^{(2)}$) attributing uniform mass proportional to $n^{-2}$ to each $D_n^{(1)}$ (resp.~$D_n^{(2)}$).
Considering the reference permuton $\mu_3^\nearrow$ of parameter $(3,0)$, \Cref{ODG_reference} tells us 
\begin{equation*}
    \expec{\LIS{\sample{N}{\mu_3^{\nearrow}}}} = \underset{N\to\infty}{\ThetaT}\left( \sqrt{N} \right).
\end{equation*}
Since $\mu_3^\nearrow$ attributes the same masses to the boxes of its support as density $f$ attributes to its own, sampled permutations from both pre-permutons have same law (the same remark as in the upper bound of \Cref{ODG_divergence_pinched} holds; 
$\mu_3^\nearrow$ is the permuton associated to the pre-permuton $\mu_f$). 
Consequently:
\begin{equation*}
    \expec{\LIS{\sample{N}{\mu_f^{\nearrow}}}} = \underset{N\to\infty}{\ThetaT}\left( \sqrt{N} \right).
\end{equation*}
The case of density $g$ is similar but with a slight alteration.
Indeed, considering the reference permuton $\mu_2^\nearrow$ of parameter $(2,0)$, \Cref{ODG_reference} tells us 
\begin{equation}\label{odg_ref_beta2}
    \expec{\LIS{\sample{N}{\mu_2^{\nearrow}}}} = \underset{N\to\infty}{\ThetaT}\left( \sqrt{N} \right).
\end{equation}
Note that $\mu_2^\nearrow$ attributes the same masses to the boxes of its support as density $g$ attributes to its own. 
A key difference here is that the rectangle boxes $D_n^{(1)}$ are not placed in an up-right manner inside the unit square, so sampled permutations from permuton $\mu_2^{\nearrow}$ and density $g$ do not have the same law. 
To work around this problem, we can use an appropriate coupling. 
Take random i.i.d.~points $X_1,\dots,X_N$ distributed under density $g$. 
Consider, for each $n\in\bN^*$, the affine transformation $a_n$ mapping $D_n^{(1)}$ to $C_n$ and assemble them into a function $a$ from $\cup_{n\ge1}D_n^{(1)}$ to $\cup_{n\ge1}C_n$.
Then the image points $a(X_1),\dots,a(X_N)$ are i.i.d.~under the measure $\mu_2^\nearrow$. 
Moreover, each increasing subset of $\lbrace X_1,\dots,X_N\rbrace$ is mapped to an increasing subset of $\lbrace a(X_1),\dots,a(X_N)\rbrace$. 
This coupling argument shows that $\LIS{\sample{N}{\mu_g}}$ is stochastically dominated by $\LIS{\sample{N}{\mu_2^\nearrow}}$, and (\ref{odg_ref_beta2}) then implies:
\begin{equation*}
    \expec{\LIS{\sample{N}{\mu_g}}} = \underset{N\to\infty}{\OT}\left( \sqrt{N} \right).
\end{equation*}
Density $h$ is handled in the same way. 
Hence:
\begin{equation*}
    \expec{\LIS{\sample{N}{\mu_\rho}}} = \underset{N\to\infty}{\OT}\left( \sqrt{N} \right).
\end{equation*}
To conclude the proof of \Cref{ODG_divergence_integrable}, the lower bound is obtained as a direct consequence of \Cref{lem_maj_density} using the uniform case (see the proof of \Cref{coro_cont}).

\section{Study of densities diverging along the diagonal}\label{section_new_proofs}

\subsection{Lower bound of Theorem \ref{ODG_divergence_along}}

From now on we consider a density $\rho$ satisfying the hypothesis of \Cref{ODG_divergence_along} for some exponent $\alpha\in]-1,0[$. 
As explained in \Cref{section_methodes}, the idea is to slice the unit square into small boxes and investigate the number of sampled points appearing in appropriate increasing sequences of boxes.
Let $N\in\bN^*$ and take random i.i.d.~points $X_1,\dots,X_N$ distributed under the density $\rho$. Set
\begin{equation*}
    b_N := \left\lfloor N^{1/(\alpha+2)} \right\rfloor,
\end{equation*}
and define a family of $b_N^2$ identical boxes by
\begin{equation*}
    \text{for all }(i,j)\in\llbracket1,b_N\rrbracket^2,\quad
    C_{i,j} := \left[ \frac{(i-1)}{b_N} , \frac{i}{b_N}\right]\times\left[ \frac{(j-1)}{b_N} , \frac{j}{b_N}\right].
\end{equation*}
This covering of the unit square will be useful for the upper bound, while the lower bound aimed for in this section only requires using the increasing sequence of boxes $(C_{k,k})_{k\in\llbracket1,b_N\rrbracket}$.
More precisely, we make use of the inequality
\begin{equation}\label{un_point_chaque_boite}
    \LIS{X_1,\dots,X_N} \ge \sum_{k=1}^{b_N}\mathbf{1}_{N_k\ge 1}
\end{equation}
where each $N_k$ denotes the number of points among $X_1,\dots,X_N$ in $C_{k,k}$.
Thanks to the hypothesis made on $\rho$, we can fix $\delta,\varepsilon>0$ such that
\begin{equation*}
    \text{for all }(x,y)\in\carre
    \text{ satisfying }\vert x-y\vert <\delta,
    \quad
    \rho(x,y)\ge \varepsilon\vert x-y\vert^\alpha.
\end{equation*}
Now suppose $N$ is large enough to have $b_N^{-1}<\delta$ and compute, for any $k\in\llbracket1,b_N\rrbracket$:
\begin{multline*}
    m_k := 
    \int_{C_{k,k}}\rho(x,y) dxdy
    \ge \int_{C_{k,k}}\varepsilon\vert x-y\vert^\alpha dxdy
    = \varepsilon\int_0^{b_N^{-1}}dx\int_{-x}^{b_N^{-1}-x}dz\vert z\vert^\alpha
    \\= \varepsilon\int_0^{b_N^{-1}}dx\left(\frac{x^{\alpha+1}}{\alpha+1} + \frac{(b_N^{-1}-x)^{\alpha+1}}{\alpha+1}\right)
    = \frac{2\varepsilon}{(\alpha+1)(\alpha+2)}b_N^{-(\alpha+2)}
    = \underset{N\to\infty}{\Omega}\left(\frac{1}{N}\right).
\end{multline*}
Hence there exists $\eta>0$ such that for all $N\in\bN^*$ and $k\in\llbracket1,b_N\rrbracket$, $m_k\ge \frac{\eta}{N}$.
Since $N_k$ follows a binomial law of parameter $(N,m_k)$, we deduce:
\begin{equation*}
    \bP(N_k=0)=(1-m_k)^N \le (1-\eta/N)^N \cv{N\to\infty} e^{-\eta}<1.
\end{equation*}
Consequently there exists $p_0>0$ such that for any large enough $N\in\bN^*$ and all $k\in\llbracket1,b_N\rrbracket$, $\bP(N_k\ge 1)\ge p_0$.
Hence for large enough $N$, using \Cref{un_point_chaque_boite}:
\begin{equation*}
    \expec{\LIS{\sample{N}{\mu_\rho}}}
    \ge \expec{ \sum_{k=1}^{b_N}\mathbf{1}_{N_k\ge 1} }
    \ge \sum_{k=1}^{b_N} p_0
    = p_0 b_N
    = \underset{N\to\infty}{\Omega}\left(N^{1/(\alpha+2)}\right).
\end{equation*}

\subsection{Upper bound of Theorem \ref{ODG_divergence_along}}

We use the same notation as in the previous section, but this time we investigate the whole grid $(C_{i,j})_{i,j\in\llbracket1,b_N\rrbracket}$. 
Say a sequence of distinct boxes
$\mathcal{C}=(C_{i_1,j_1},\dots,C_{i_n,j_n})$
is increasing whenever
\begin{equation*}
    \text{for all }k\in \llbracket1,n-1\rrbracket,\quad
    i_k\le i_{k+1}
    \text{ and }
    j_k\le j_{k+1}.
\end{equation*}
When this happens, one has $n < 2b_N$. 
Indeed, when browsing the sequence, each coordinate increases at most $b_N-1$ times.
\smallskip

Write $\mathcal{X} := \lbrace X_1,\dots,X_N\rbrace$.
Then, for any box $C$, denote by $\mathcal{X}_C$ the set of points in $\mathcal{X}$ appearing in $C$ and, for any increasing sequence of boxes $\mathcal{C}$, denote by $\mathcal{X}_\mathcal{C}$ the set of points in $\mathcal{X}$ appearing in some box of $\mathcal{C}$.
We aim to make use of the inequality
\begin{equation}\label{nombre_points_chaque_boite}
    \LIS{\mathcal{X}} \le \sup_{\mathcal{C}\text{ increasing sequence of boxes}} \abs{\mathcal{X}_\mathcal{C}}
\end{equation}
since the family of boxes occupied by an increasing subset of points necessarily rearranges as an increasing sequence of boxes. 
Now, thanks to the hypothesis made on $\rho$, let $M>0$ be such that
\begin{equation*}
    \text{for all }(x,y)\in\carre,\quad
    \rho(x,y)\le M |x-y|^\alpha.
\end{equation*}
Since this latter function puts more mass on the diagonal boxes than the outside ones, we have for any $i,j\in\llbracket1,b_N\rrbracket$:
\begin{multline*}
    \int_{C_{i,j}}\rho(x,y)dxdy
    \le \int_{C_{i,j}}M |x-y|^\alpha dxdy
    \le M\int_0^{b_N^{-1}}\int_0^{b_N^{-1}} |x-y|^\alpha dxdy
    \\=\frac{2M}{(\alpha+1)(\alpha+2)}b_N^{-(\alpha+2)}
    = \underset{N\to\infty}{\cO}(1/N).
\end{multline*}
Thus there exists $M'>0$ such that, for large enough $N$, each variable $\abs{\mathcal{X}_{C_{i,j}}}$ is stochastically dominated by the law $\mathrm{Bin}(N,M'/N)$. 
Additionally \Cref{Bernstein} yields, denoting by $\mathcal{S}_N$ a random variable of law $\mathrm{Bin}(N,M'/N)$:
\begin{multline*}
    \prob{ \abs{\mathcal{S}_N - M'} \ge \log(N)^2\sqrt{M'} }
    \le 2\exp\left(-\psi_N\right)
    \\\text{where}\quad
    \psi_N = \frac{
    \log(N)^4 M' /2
    }{
    M' (1-M'/N) + \log(N)^2 \sqrt{M'}/3
    } = \underset{N\to\infty}{\Theta}(\log(N)^2).
\end{multline*}
This inequality, along with the aforementioned stochastic domination, implies that for large enough $N$:
\begin{equation*}
    \prob{ \forall (i,j) ,\; \abs{\mathcal{X}_{C_{i,j}}} \le M'+\log(N)^2\sqrt{M'} } \ge 1 - 2b_N^{2}\exp(-\psi_N) .
\end{equation*}
Hence, using the fact that an increasing sequence of boxes contains at most $2b_N$ boxes:
\begin{equation*}
    \prob{ \text{for any increasing sequence of boxes }\mathcal{C} ,\; \abs{\mathcal{X}_\mathcal{C}} \le 2b_N\left(M'+\log(N)^2\sqrt{M'}\right) } 
    \ge 1 - 2b_N^{2}\exp(-\psi_N)
\end{equation*}
and then, by \Cref{nombre_points_chaque_boite}:
\begin{equation*}
    \prob{ \LIS{\mathcal{X}} \le 2b_N\left(M'+\log(N)^2\sqrt{M'}\right) }
    \ge 1 - 2b_N^{2}\exp(-\psi_N).
\end{equation*}
To conclude the proof of \Cref{ODG_divergence_along}, it suffices to write:
\begin{equation*}
    \expec{\LIS{\sample{N}{\mu_\rho}} }
    \le 2b_N\left(M'+\log(N)^2\sqrt{M'}\right) + 2b_N^{2}N\exp(-\psi_N)
    = \OT\left( N^{1/(\alpha+2)} \right) .
\end{equation*}

\section*{Acknowledgements}

The author would like to express his sincere thanks to Valentin F{\'e}ray for his constant support and enlightening discussions, as well as all members of the IECL who contribute to a prosperous environment for research in mathematics.

\bibliographystyle{alphaurl}
\bibliography{bibli}

\end{document}